\documentclass[12pt,reqno]{amsart}
\usepackage{times}
\usepackage{amsmath,amsfonts, amstext,amssymb,amsbsy,amsopn,amsthm}
\usepackage{dsfont}
\usepackage{esint}
\usepackage{graphicx}   
\usepackage{hyperref}
\usepackage[all]{xy}
\synctex=-1


\setlength{\textheight}{8.50in} \setlength{\textwidth}{6.5in}
\setlength{\columnsep}{0.5in} \setlength{\topmargin}{0.0in}
\setlength{\headheight}{0in} \setlength{\headsep}{0.5in}
\setlength{\parindent}{1pc}
\setlength{\oddsidemargin}{0in}  
\setlength{\evensidemargin}{0in}

\newcommand{\dR}{\mathds{R}}

\newcommand{\dZ}{\mathds{Z}}

\DeclareMathOperator{\PV}{P.V.}

\newcommand{\lap}{\mbox{$\bigtriangleup$}}

\newcommand{\ra}{{\mbox{$\rightarrow$}}}
\newcommand{\be}{\begin{equation}}
\newcommand{\ee}{\end{equation}}
\newtheorem{mthm}{Theorem}
\newtheorem{theorem}{Theorem}[section]
\newtheorem{lemma}[theorem]{Lemma}
\newtheorem{proposition}[theorem]{Proposition}
\newtheorem{corollary}[theorem]{Corollary}
\theoremstyle{definition}

\theoremstyle{remark}
\newtheorem{remark}{Remark}[section]

\theoremstyle{remark}

\numberwithin{equation}{section}

\begin{document}

\title{A direct method of moving spheres on fractional order equations}

\author{Wenxiong Chen }
\address{Department of Mathematics, Yeshiva University, New York, NY, USA}
\email{wchen@yu.edu}

\author{Yan Li}
\address{Department of Mathematics, Yeshiva University, New York, NY, USA}
\email{yali3@mail.yu.edu}

\author{Ruobing Zhang}
\address{Department of Mathematics, Princeton University, Princeton, NJ, USA}
\email{ruobingz@math.princeton.edu}

\date{\today}

\begin{abstract}
In this paper, we introduce a direct method of moving spheres for the nonlocal fractional Laplacian $(-\triangle)^{\alpha/2}$ with $0<\alpha<2$, in which a key ingredient is the narrow region maximum principle. As immediate applications, we classify the non-negative solutions for a semilinear equation involving the fractional Laplacian in $\mathbb{R}^n$; we prove a non-existence result for prescribing $Q_{\alpha}$ curvature equation on $\mathbb{S}^n$; then by combining the direct method of moving planes and moving spheres, we establish a Liouville type theorem on a half Euclidean space. We expect to see more applications of this method to many other nonlinear equations involving non-local operators.
\end{abstract}

\maketitle

\tableofcontents

\section{Introduction}
\label{s:introduction}

Recently, the fractional Laplacian has seen more and more applications in Physics, Chemistry, Biology, Probability, and Finance; and it has drawn more and more attention from the mathematical community. This fractional Laplacian is a pseudo-differential operator defined by
\begin{eqnarray}
(-\triangle)^{\alpha/2}u (x)&\equiv& C_{n,\alpha}\PV\int_{\dR^n}\frac{u(x)-u(y)}{|x-y|^{n+\alpha}}dy\nonumber\\
&\equiv&C_{n,\alpha}\lim\limits_{\epsilon\to0}\int_{\dR^n\setminus B_{\epsilon}(x)}\frac{u(x)-u(y)}{|x-y|^{n+\alpha}}dy,
\label{1.1}
\end{eqnarray}
for any real number $0<\alpha<2$.

Let
\begin{equation}
L_{\alpha}\equiv\Big\{u:\dR^n\to\dR^1\Big|\int_{\dR^n}\frac{|u(x)|}{1+|x|^{n+\alpha}}dx<\infty\Big\}.
\end{equation}
Then the operator $(-\triangle)^{\alpha/2}$ is well defined on the functions $u$ in $L_{\alpha} \cap C^{1,1}_{loc}$. One can see from the definition (\ref{1.1}) that it is nonlocal: Even $u$ is identically zero in a neighbourhood of a point $x$, $(-\triangle)^{\alpha/2}u(x)$ still may not vanish.
Hence, traditional methods on local differential operators, such as on Laplacian $-\triangle$ may not work on this nonlocal operator. To circumvent this difficulty,
Caffarelli and Silvestre \cite{CS} introduced the {\em extension method} that reduced this nonlocal problem into a local one in higher dimensions. For a function $u:\mathbb{R}^n \ra \mathbb{R}$, let  $U:\mathbb{R}^n\times[0, \infty) \ra \mathbb{R}$ be its extension satisfying
$$\left\{\begin{array}{ll}
div(y^{1-\alpha} \nabla U)=0, & (x,y) \in \mathbb{R}^n\times[0, \infty),\\
U(x, 0) = u(x), & x \in \mathbb{R}^n.
\end{array}
\right.
$$
Then
$$(-\lap)^{\alpha/2}u (x) = - C_{n,\alpha} \displaystyle\lim_{y \ra 0^+}
y^{1-\alpha} \frac{\partial U}{\partial y},  \;\; x \in \mathbb{R}^n.$$

This {\em extension method } provides a powerful tool and leads to very active studies in equations involving the fractional Laplacian, and a series of fruitful results have been obtained (see \cite{BCPS} \cite{CZ}    and the references therein).

Another approach on such nonlocal problems is to study the corresponding integral equations as introduced in \cite{CLO} and \cite{CLO1}.

However, when working at the extended problems or the corresponding integral equations, sometimes one needs to impose extra conditions on the solutions, which would not be necessary if one considers the pseudo differential equation directly (see the Introduction in \cite{CLL} for more details). Moreover, for equations involving the uniform elliptic nonlocal operators
\begin{equation}
C_{n,\alpha} \, \lim_{\epsilon \ra 0} \int_{\mathbb{R}^n\setminus B_{\epsilon}(x)} \frac{a(x-z)((u(x)-u(z))}{|x-z|^{n+\alpha}} dz = f(x,u),
\label{a}
\end{equation}
where
$$ 0 < c_0 \leq a(y) \leq C_1 ;$$
and for equations containing fully nonlinear nonlocal operators, such as
\begin{equation}
F_{\alpha}(u(x)) \equiv C_{n,\alpha} \, \lim_{\epsilon \ra 0} \int_{\mathbb{R}^n\setminus B_{\epsilon}(x)} \frac{G(u(x)-u(z))}{|x-z|^{n+\alpha}} dz =f(x,u)
\label{F}
\end{equation}
(see \cite{CS1} for the introductions of these operators), so far as we know, there has neither been any {\em extension method} nor {\em integral equation method} that work for these kinds of operators. This motivates us to come up with direct approaches on general nonlocal operators.

In our previous paper \cite{CLL}, a direct method of moving planes for the fractional Laplacian has been introduced and has been applied to obtain symmetry, monotonicity, and non-existence of solutions for various semi-linear equations involving the fractional Laplacian. Moreover, this direct approach can be applied to study the qualitative properties of solutions to uniformly elliptic problem (\ref{a}) \cite{TF}
as well as to fully nonlinear problem (\ref{F}) \cite{CLLg}.

In this paper, we will introduce another direct method--the method of moving spheres on the fractional Laplacian, which is more convenient than the method of moving planes in applications in some contexts.
For instance, in \cite{ChLi} and \cite{ChLi2}, for the case $\alpha =2$, the authors applied the method of moving spheres to prove a non-existence result for the prescribing scalar curvature equation, and hence answered an open question posed by Kazdan \cite{Kaz} that whether the well-known Kazdan-Warner necessary condition
is also sufficient for the existence of a solution in the case the given curvature function is rotationally symmetric. Here in Section \ref{s:prescribing}, we will extend this non-existence result to all real values of $\alpha$ between $0$ and $2$ by applying the direct method of moving spheres in the fractional setting (see Theorem 3 and Remark 1.1 for more details).

Similar to the method of moving planes, the method of moving spheres is a continuous application
of maximum principles. Hence, in Section 2, we first obtain the key ingredient in carrying out this method--the {\it (Spherically) Narrow Region Maximum Principle}:

\begin{mthm} \label{narrow-region-principle0}Let $w\in L_{\alpha}\cap C_{loc}^{1,1}(\Omega)$ be lower semi-continuous on $\bar{\Omega}$. Let $x_0$ be any point in $\mathbb{R}^n$. For $x \in B_{\lambda}(x_0)$, denote
$$x^{\lambda}\equiv\frac{\lambda^2 (x-x_0)}{|x-x_0|^2}+x_0,$$
the inversion point of $x$ about the sphere $S_{\lambda}(x_0) \equiv \{ x \mid |x-x_0|= \lambda\}$.

Assume that  $c(x)$ is bounded from below in $\Omega$ and
\begin{equation}
\begin{cases}
(-\triangle)^{\alpha/2}w(x)+c(x)w(x)\geq 0,\ x\in\Omega\subset B_{\lambda}(x_0),\nonumber\\
w(x)\geq 0,\  x\in B_{\lambda}(x_0)\setminus\Omega,
\nonumber\\
w(x)=-(\frac{\lambda}{|x-x_0|})^{n-\alpha}w(x^{\lambda}),\ x\in B_{\lambda}(x_0),
\end{cases}
\end{equation}
Then there exists some sufficiently small $\delta>0$ such that if $\Omega\subset \{x\in\dR^n|\lambda-\delta<|x-x_0|<\lambda\}$ (a spherically narrow region), then we have
$$
w(x)\geq 0, \;\; \forall x \in \Omega.
$$
Furthermore, if $w(x)=0$ for some $x\in\Omega$, then $w(x)\equiv 0$
for all $x\in\dR^n$.
\end{mthm}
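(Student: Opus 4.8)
The plan is a proof by contradiction modeled on the narrow--region argument for moving planes in \cite{CLL}, the genuinely new ingredient being the algebra of the spherical inversion $x\mapsto x^{\lambda}$. Suppose $w<0$ at some point of $\Omega$ and set $-m:=\inf_{\Omega}w<0$. Using that $\overline\Omega$ is compact, $w$ is lower semi-continuous on it, $w\ge 0$ on $B_{\lambda}(x_0)\setminus\Omega$, and the third hypothesis forces $w=0$ on $S_{\lambda}(x_0)$ (the inversion fixes that sphere), one gets that this infimum is attained at an interior point $\bar x\in\Omega$, and $w(\bar x)=-m\le w(z)$ for every $z\in B_{\lambda}(x_0)$. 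Near $\bar x$ the function is $C^{1,1}$ with $\nabla w(\bar x)=0$, so $|w(\bar x)-w(z)|\lesssim|\bar x-z|^{2}$ there and, since $\alpha<2$, $(-\triangle)^{\alpha/2}w(\bar x)$ is represented by an absolutely convergent integral (no principal value is needed at $\bar x$); $w\in L_{\alpha}$ likewise makes the integral over the exterior of $B_{\lambda}(x_0)$ absolutely convergent.

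Next I would split $\mathbb{R}^{n}$ into $B_{\lambda}(x_0)$ and its exterior and, in the exterior piece, substitute $y=z^{\lambda}$, using $|z-x_0|\,|z^{\lambda}-x_0|=\lambda^{2}$, $dy=(\lambda/|z-x_0|)^{2n}\,dz$, and the anti-symmetry $w(z^{\lambda})=-(|z-x_0|/\lambda)^{n-\alpha}w(z)$, to obtain
\[
(-\triangle)^{\alpha/2}w(\bar x)=C_{n,\alpha}\int_{B_{\lambda}(x_0)}\left[\frac{w(\bar x)-w(z)}{|\bar x-z|^{n+\alpha}}+\big(w(\bar x)+\theta(z)w(z)\big)\,\widetilde K(z)\right]dz,
\]
where $\theta(z)=(|z-x_0|/\lambda)^{n-\alpha}$ and $\widetilde K(z)=(\lambda/|z-x_0|)^{2n}\,|\bar x-z^{\lambda}|^{-(n+\alpha)}$. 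The heart of the matter is the pointwise kernel inequality $\theta(z)\,\widetilde K(z)<|\bar x-z|^{-(n+\alpha)}$ for $z\in B_{\lambda}(x_0)\setminus\{\bar x\}$: one checks $\theta(z)(\lambda/|z-x_0|)^{2n}=(\lambda/|z-x_0|)^{n+\alpha}$, and the inversion--distance identity $|z-x_0|\,|\bar x-z^{\lambda}|=|\bar x-x_0|\,|\bar x^{\lambda}-z|$ reduces the claim to $\lambda^{2}|\bar x-z|^{2}<|\bar x-x_0|^{2}|\bar x^{\lambda}-z|^{2}$, whose two sides differ by $(\lambda^{2}-|\bar x-x_0|^{2})(\lambda^{2}-|z-x_0|^{2})>0$ because $\bar x,z\in B_{\lambda}(x_0)$. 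Writing $m=-w(\bar x)$, the bracket above equals $-m\big(|\bar x-z|^{-(n+\alpha)}+\widetilde K(z)\big)-w(z)\big(|\bar x-z|^{-(n+\alpha)}-\theta(z)\widetilde K(z)\big)$; the kernel inequality together with minimality ($-w(z)\le m$) makes this \emph{strictly negative throughout} $B_{\lambda}(x_0)$, and bounded above by $-m\,|\bar x-z|^{-(n+\alpha)}$ on $B_{\lambda-\delta}(x_0)$, where $w\ge 0$.

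The narrowness then produces the blow-up. Since $\Omega\subset\{\lambda-\delta<|x-x_0|<\lambda\}$, the ball $B_{\lambda-\delta}(x_0)$ lies in $B_{\lambda}(x_0)\setminus\Omega$ and $\bar x$ is within distance $\delta$ of $S_{\lambda-\delta}(x_0)$, so some $B_{\delta/2}(q)$ with $|q-\bar x|\le 2\delta$ sits inside $B_{\lambda-\delta}(x_0)$; hence $\int_{B_{\lambda-\delta}(x_0)}|\bar x-z|^{-(n+\alpha)}\,dz\ge c_{0}\,\delta^{-\alpha}$ with $c_{0}=c_{0}(n,\alpha)>0$ once $\delta$ is small compared to $\lambda$. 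Discarding the (negative) integrand off $B_{\lambda-\delta}(x_0)$ gives $(-\triangle)^{\alpha/2}w(\bar x)\le -C_{1}m\,\delta^{-\alpha}$ for some $C_{1}=C_{1}(n,\alpha)>0$. On the other hand the differential inequality and $c(x)\ge -C_{0}$ give $(-\triangle)^{\alpha/2}w(\bar x)\ge -c(\bar x)w(\bar x)=c(\bar x)m\ge -C_{0}m$. Thus $C_{1}\delta^{-\alpha}\le C_{0}$, which fails once $\delta<(C_{1}/C_{0})^{1/\alpha}$; this contradiction forces $w\ge 0$ in $\Omega$. For the strong maximum principle, if $w(\hat x)=0$ at some $\hat x\in\Omega$, then $w\ge 0$ on $B_{\lambda}(x_0)$ by what was just proved and $w\le 0$ outside by anti-symmetry; repeating the computation at $\hat x$ collapses $(-\triangle)^{\alpha/2}w(\hat x)$ to $-C_{n,\alpha}\int_{B_{\lambda}(x_0)}w(z)\big(|\hat x-z|^{-(n+\alpha)}-\theta(z)\widetilde K(z)\big)dz\le 0$, while the equation makes it $\ge 0$; hence the non-negative integrand vanishes a.e., and the positivity of the kernel difference gives $w\equiv 0$ on $B_{\lambda}(x_0)$, hence $w\equiv 0$ on $\mathbb{R}^{n}$ by anti-symmetry.

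The main obstacle is precisely the kernel inequality, with its clean factorization $(\lambda^{2}-|\bar x-x_0|^{2})(\lambda^{2}-|z-x_0|^{2})>0$: this is where the spherical case departs from the planar one, in which the reflected point is simply farther away; once it is in hand the rest is the familiar narrow-region bookkeeping. The only other delicate point is the verification that the negative minimum is interior (so that $(-\triangle)^{\alpha/2}$ acts classically there) and that all split integrals, including the inversion-transformed exterior one, converge absolutely, both of which rely on $w\in L_{\alpha}\cap C^{1,1}_{loc}(\Omega)$, the sign condition, and $0<\alpha<2$.
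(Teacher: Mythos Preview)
Your argument is correct and follows essentially the same route as the paper: a negative interior minimum $\bar x$, the split of the singular integral into interior and exterior pieces joined by the inversion $y=z^{\lambda}$, the kernel comparison via the factorization $(\lambda^{2}-|\bar x-x_{0}|^{2})(\lambda^{2}-|z-x_{0}|^{2})>0$, and a $\delta^{-\alpha}$ blow-up played against the lower bound on $c$. The only cosmetic differences are that the paper first shifts to $\tilde w=w-w(\bar x)$ and harvests the $\delta^{-\alpha}$ from the exterior integral $\int_{\mathbb{R}^{n}\setminus B_{\lambda}}|\bar x-y|^{-(n+\alpha)}\,dy$, whereas you work directly with $w$ and extract it from the inner ball $B_{\lambda-\delta}(x_{0})$; these are equivalent via the change of variables you already performed.
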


As applications, in Section 3, we classify non-negative solutions of the semi-linear elliptic equations
\begin{equation}
(-\triangle)^{\alpha/2}u(x)=g(u(x)), \ x\in\dR^n.\label{g-eq0}
\end{equation}

\begin{mthm}\label{classification-theorem0}
Assume that $g:\dR_+^1\rightarrow \dR_{+}^1\cup\{0\}$
is locally bounded and
$\frac{g(r)}{r^{p}}$ is non-increasing with $p\equiv\frac{n+\alpha}{n-\alpha}$.
If $u\in L_{\alpha}\cap C_{loc}^{1,1}(\dR^n)$ is a nonnegative solution to equation (\ref{g-eq0}),
then one of the following holds:

$(1)$ For some $C_0\geq0$, $u(x)\equiv C_0$ for every $x\in\dR^n$ and $g(C_0)=0$.

$(2)$ There exists $\beta_1, \beta_2>0$, $x_0\in\dR^n$ such that
\begin{equation}
u(x)=\frac{\beta_1}{(|x-x_0|^2+\beta_2^2)^{\frac{n-\alpha}{2}}},\ \forall x\in\dR^n,
\end{equation}
and $g(r)$ is a multiple of $r^{p}$
for every $r\in(0,\max\limits_{x\in\dR^n}u(x)]$.
\end{mthm}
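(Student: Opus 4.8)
The plan is to run the method of moving spheres centered at a carefully chosen point, with the narrow region maximum principle (Theorem \ref{narrow-region-principle0}) providing the crucial start and the sliding mechanism. For $x_0\in\dR^n$ and $\lambda>0$ define the Kelvin-type transform
\[
u_{x_0,\lambda}(x)\equiv\Big(\frac{\lambda}{|x-x_0|}\Big)^{n-\alpha}u(x^{\lambda}),\qquad w_{x_0,\lambda}(x)\equiv u_{x_0,\lambda}(x)-u(x),
\]
and set $\Sigma_{\lambda}(x_0)\equiv B_{\lambda}(x_0)\setminus\{x_0\}$. The conformal covariance of $(-\triangle)^{\alpha/2}$ gives $(-\triangle)^{\alpha/2}u_{x_0,\lambda}(x)=\big(\frac{\lambda}{|x-x_0|}\big)^{n+\alpha}g(u(x^{\lambda}))$, so subtracting the equation for $u$ yields
\[
(-\triangle)^{\alpha/2}w_{x_0,\lambda}(x)=\Big(\frac{\lambda}{|x-x_0|}\Big)^{n+\alpha}g(u(x^{\lambda}))-g(u(x)),\qquad x\in\Sigma_{\lambda}(x_0).
\]
Using that $|x^{\lambda}-x_0|\le\lambda$ in $\Sigma_\lambda(x_0)$ together with the monotonicity hypothesis that $g(r)/r^{p}$ is non-increasing with $p=\frac{n+\alpha}{n-\alpha}$, one checks on the set where $w_{x_0,\lambda}(x)<0$ (i.e. $u(x)>u_{x_0,\lambda}(x)=\big(\frac{\lambda}{|x-x_0|}\big)^{n-\alpha}u(x^{\lambda})$) that
\[
(-\triangle)^{\alpha/2}w_{x_0,\lambda}(x)+c_{x_0,\lambda}(x)\,w_{x_0,\lambda}(x)\ \ge\ 0
\]
with a coefficient $c_{x_0,\lambda}$ bounded below on that set; and the anti-symmetry $w_{x_0,\lambda}(x)=-\big(\frac{\lambda}{|x-x_0|}\big)^{n-\alpha}w_{x_0,\lambda}(x^{\lambda})$ holds by construction. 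Thus the hypotheses of Theorem \ref{narrow-region-principle0} are in place on the (bad) region $\Omega=\{x\in\Sigma_\lambda(x_0)\mid w_{x_0,\lambda}(x)<0\}$ once we know this region lies in a thin spherical shell.

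The argument then proceeds in the standard three stages. \emph{Start:} fix $x_0$ and show that for $\lambda>0$ sufficiently small, $w_{x_0,\lambda}\ge0$ in $\Sigma_\lambda(x_0)$; here the behavior of $u$ near $x_0$ (boundedness and positivity, or the trivial-solution alternative if $u$ vanishes somewhere) forces the bad set to be empty or confined to a narrow shell near $S_\lambda(x_0)$, where Theorem \ref{narrow-region-principle0} applies. \emph{Sliding:} define $\bar\lambda(x_0)=\sup\{\mu>0\mid w_{x_0,\lambda}\ge0\ \text{in}\ \Sigma_\lambda(x_0)\ \forall\,\lambda\le\mu\}$. If $\bar\lambda(x_0)<\infty$, then by lower semicontinuity and the strong maximum principle part of Theorem \ref{narrow-region-principle0} (the "$w\equiv0$" dichotomy), $w_{x_0,\bar\lambda}$ is either identically zero or strictly positive in the interior; in the strictly positive case one uses the narrow region principle again on a shell near $S_{\bar\lambda}(x_0)$ plus a uniform-estimate/contradiction argument at a minimizing sequence to push $\bar\lambda$ a little further, contradicting maximality. \emph{Dichotomy over $x_0$:} either there exists $x_0$ with $\bar\lambda(x_0)=\infty$, or $\bar\lambda(x_0)<\infty$ for every $x_0$ and in each case the sphere gets "stuck" with $w_{x_0,\bar\lambda(x_0)}\equiv0$.

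In the first case, $\bar\lambda(x_0)=\infty$ means $u_{x_0,\lambda}\ge u$ on $\Sigma_\lambda(x_0)$ for all $\lambda$, from which a standard calculation (letting $|x|\to\infty$ along the transform) shows $u$ cannot decay and must be constant, giving alternative $(1)$; one then reads off $g(C_0)=0$ directly from the equation. In the second case, the equality $u_{x_0,\bar\lambda(x_0)}\equiv u$ for every $x_0$ is exactly the hypothesis of the classical calculus lemma (the characterization of functions invariant under a family of Kelvin transforms, cf. the Li--Zhu / Li--Zhang lemma) which forces
\[
u(x)=\frac{\beta_1}{(|x-x_0|^2+\beta_2^2)^{\frac{n-\alpha}{2}}}
\]
for some $\beta_1,\beta_2>0$ and some fixed $x_0$. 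Plugging this bubble back into \eqref{g-eq0} and using the known identity $(-\triangle)^{\alpha/2}$ of such a bubble is a multiple of $u^{p}$ identifies $g(r)$ with a constant multiple of $r^{p}$ on the range of $u$, giving alternative $(2)$. The main obstacle I expect is the sliding step: making rigorous that when $w_{x_0,\bar\lambda}>0$ in the interior one can increase $\lambda$ slightly while keeping the bad set inside a narrow shell — this requires a quantitative lower bound for $w_{x_0,\bar\lambda}$ on compact subsets combined with control of $u$ near the moving sphere and near infinity (so the nonlocal tail in $(-\triangle)^{\alpha/2}$ does not spoil the sign), and it is the place where the nonlocal nature of the operator makes the classical $\alpha=2$ argument nontrivial to adapt. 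A secondary technical point is verifying the lower bound on $c_{x_0,\lambda}$ uniformly enough to invoke Theorem \ref{narrow-region-principle0}, which uses local boundedness of $g$ together with local bounds on $u$.
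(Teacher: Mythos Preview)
Your outline tracks the paper's argument in spirit (start via the narrow region principle, slide, then invoke a Li--Zhu type calculus lemma), and your concerns about the sliding step and about the lower bound on $c_{x_0,\lambda}$ are the right technical points --- the paper handles them essentially as you anticipate, using local boundedness of $g$ and $u$ to bound the coefficient and a compact-subset lower bound on $w_{x_0,\bar\lambda}$ together with the narrow region principle to push $\lambda$ past $\bar\lambda$. However, the real obstacle is not where you flagged it; there is a genuine gap in your dichotomy step.

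You split into ``there exists $x_0$ with $\bar\lambda(x_0)=\infty$'' versus ``$\bar\lambda(x_0)<\infty$ for every $x_0$'', and in the first case you claim that a \emph{single} center with infinite critical radius forces $u$ to be constant ``by a standard calculation letting $|x|\to\infty$''. This does not work: the Li--Zhu lemma that converts the moving-spheres inequality into $\nabla u\equiv 0$ requires $u_{y,\lambda}\ge u$ on $B_\lambda(y)$ for \emph{every} center $y\in\dR^n$ and every $\lambda>0$ --- one differentiates at a point $z$ in the direction $(z-y)/|z-y|$, lets $|y|\to\infty$, and then uses arbitrariness of the direction to conclude $\nabla u(z)=0$. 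With only one center you control one ray of directions and cannot conclude constancy; no asymptotic-in-$|x|$ argument from a single center fills this.

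What is missing is the transfer step that the paper isolates as the core of its key lemma: the dichotomy must first be upgraded to ``either $\bar\lambda(x_0)=\infty$ for \emph{all} $x_0$, or $\bar\lambda(x_0)<\infty$ for \emph{all} $x_0$''. The mechanism is that if \emph{some} center $z_0$ has finite critical scale $\mu_0$, then (by your own sliding step) $u_{z_0,\mu_0}\equiv u$, from which one reads off the exact asymptotic $|y|^{n-\alpha}u(y)\to \mu_0^{n-\alpha}u(z_0)>0$ as $|y|\to\infty$. Feeding this decay into the inequality $u_{x_0,\lambda_j}(x_1)\ge u(x_1)$ at a hypothetical center $x_0$ with $\lambda_j\to\infty$ and a fixed $x_1$ gives $0\ge u(x_1)$, contradicting positivity. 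Only after this all-or-nothing dichotomy is established do the Li--Zhu lemmas apply legitimately: constant in the ``all infinite'' case, bubble in the ``all finite'' case. Your treatment of the bubble case is fine.
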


In Section 4, we study prescribing $Q_{\alpha}$ curvature equations on Riemannian manifolds and obtain a non-existence result using the method of moving spheres.

\begin{mthm}\label{prescribing-Q0}
Let $(\mathbb{S}^n, g_1)$ be the round sphere with standard metric of dimension $n \geq 2$.  Assume that $Q(x)$ is continuous and rotationally symmetric on $\mathbb{S}^n$, monotone in the region where $Q>0$, and $Q\not\equiv C$. Then for every $0<\alpha\leq 2$, the prescribing $Q_{\alpha}$-curvature equation
\begin{equation}
P_{\alpha}(u)=Q(x)  u^{\frac{n+\alpha}{n-\alpha}}, \ \; x \in \mathbb{S}^n
\label{PQC}
\end{equation}
 does not admit any positive solution.
\end{mthm}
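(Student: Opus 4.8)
\medskip
\noindent\textbf{Proof strategy.}
The plan is to transplant (\ref{PQC}) to Euclidean space by stereographic projection and then to run the direct method of moving spheres, with Theorem~\ref{narrow-region-principle0} as the engine. Fix a point $P$ on the axis of rotational symmetry of $Q$, let $\pi:\mathbb{S}^n\setminus\{P\}\to\mathbb{R}^n$ be the corresponding stereographic projection, and set $v(x)=\left(\frac{2}{1+|x|^2}\right)^{\frac{n-\alpha}{2}}u(\pi^{-1}(x))$. By the conformal covariance of $P_\alpha$ (equivalently, the behaviour of $(-\triangle)^{\alpha/2}$ under Kelvin transforms with weight $n-\alpha$), a positive solution $u$ of (\ref{PQC}) produces a positive $v\in L_\alpha\cap C^{1,1}_{loc}(\mathbb{R}^n)$ with
\[
(-\triangle)^{\alpha/2}v(x)=\widetilde{Q}(|x|)\,v(x)^{p},\qquad x\in\mathbb{R}^n,\ \ p=\tfrac{n+\alpha}{n-\alpha},
\]
where $\widetilde{Q}=Q\circ\pi^{-1}$ is radial (because $P$ lies on the symmetry axis) and the conformal factors cancel exactly, because the exponent is critical; moreover, since $u$ is a smooth positive function on the \emph{closed} sphere, $v(x)|x|^{n-\alpha}$ tends to a positive constant as $|x|\to\infty$. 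If $\widetilde{Q}\le 0$ everywhere, then no such $v$ exists: being positive, continuous and vanishing at infinity, $v$ attains a positive maximum at some $x^*$, where $(-\triangle)^{\alpha/2}v(x^*)>0$ since $v\not\equiv$ const, contradicting $(-\triangle)^{\alpha/2}v(x^*)=\widetilde{Q}(x^*)v(x^*)^{p}\le 0$. So $\widetilde{Q}>0$ somewhere; replacing $P$ by its antipode if necessary, we may also assume $\widetilde{Q}$ is non-decreasing in $r=|x|$ on the set $\{\widetilde{Q}>0\}$, which by continuity and positivity is then an interval $(a,\infty)$, $0\le a<\infty$.

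Next I would move spheres about the origin. For $\lambda>0$ write $v_\lambda(x)=\left(\frac{\lambda}{|x|}\right)^{n-\alpha}v(x^\lambda)$ for the Kelvin transform of $v$ about $S_\lambda(0)$ and $w_\lambda=v_\lambda-v$. Since $\widetilde{Q}$ is radial, $v_\lambda$ solves $(-\triangle)^{\alpha/2}v_\lambda=\widetilde{Q}(\lambda^2/|x|)\,v_\lambda^{\,p}$, hence in $B_\lambda(0)$
\[
(-\triangle)^{\alpha/2}w_\lambda(x)+\bar c_\lambda(x)\,w_\lambda(x)=\bigl(\widetilde{Q}(\lambda^2/|x|)-\widetilde{Q}(|x|)\bigr)\,v_\lambda(x)^{p},
\]
with $\bar c_\lambda=-p\,\widetilde{Q}(|x|)\,\eta_\lambda^{p-1}$ bounded (by the mean value theorem and the boundedness of $\widetilde{Q},v,v_\lambda$ on the relevant sets, $\eta_\lambda$ lying between $v$ and $v_\lambda$), and $w_\lambda$ obeying the antisymmetry $w_\lambda(x)=-\left(\frac{\lambda}{|x|}\right)^{n-\alpha}w_\lambda(x^\lambda)$ of Theorem~\ref{narrow-region-principle0}, because the Kelvin transform is an involution. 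A standard computation using only the local behaviour of $v$ near $0$ and its decay at infinity shows $w_\lambda\ge 0$ in $B_\lambda(0)$ for all small $\lambda>0$. Set $\lambda_0=\sup\{\mu>0:\ w_\lambda\ge 0\ \text{in}\ B_\lambda(0)\ \forall\,0<\lambda\le\mu\}$. One rules out $\lambda_0=\infty$: otherwise, fixing $x\ne 0$ and letting $\lambda\to\infty$ in $v(x)\le\left(\frac{\lambda}{|x|}\right)^{n-\alpha}v(\lambda^2x/|x|^2)$, the decay $v(y)|y|^{n-\alpha}\to$ const forces $v(x)\le C\lambda^{-(n-\alpha)}\to 0$, impossible. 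Thus $0<\lambda_0<\infty$ and $w_{\lambda_0}\ge 0$ in $B_{\lambda_0}(0)$ by continuity.

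The core step is to conclude $w_{\lambda_0}\equiv 0$, whence a contradiction. Suppose $w_{\lambda_0}\not\equiv 0$. By the strong-maximum-principle clause of Theorem~\ref{narrow-region-principle0}, $w_{\lambda_0}>0$ throughout $B_{\lambda_0}(0)$, so $w_{\lambda_0}\ge m>0$ on each compact subset of $\overline{B_{\lambda_0}(0)}$ away from $S_{\lambda_0}(0)$. For $\lambda>\lambda_0$ close to $\lambda_0$ the set $\Omega_\lambda=\{x\in B_\lambda(0):w_\lambda(x)<0\}$ is then contained in a spherically narrow annulus $\{\lambda-\delta<|x|<\lambda\}$; there $|x|$ and $\lambda^2/|x|$ both lie near $\lambda_0$, hence in $(a,\infty)$ where $\widetilde{Q}$ is non-decreasing (granting $\lambda_0\ge a$), so, since $\lambda^2/|x|>|x|$ on $B_\lambda(0)$, the right-hand side of the displayed identity is $\ge 0$ on $\Omega_\lambda$. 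Theorem~\ref{narrow-region-principle0} then gives $w_\lambda\ge 0$ on $\Omega_\lambda$, i.e.\ $\Omega_\lambda=\emptyset$ and $w_\lambda\ge 0$ in $B_\lambda(0)$, contradicting the maximality of $\lambda_0$. Hence $w_{\lambda_0}\equiv 0$, i.e.\ $v_{\lambda_0}\equiv v$; comparing the equations for $v$ and $v_{\lambda_0}$ yields $\widetilde{Q}(\lambda_0^2/r)\equiv\widetilde{Q}(r)$ for all $r>0$, so $(a,\infty)=\{\widetilde{Q}>0\}$ must be invariant under the orientation-reversing involution $r\mapsto\lambda_0^2/r$; this forces $a=0$, and then the monotonicity of $\widetilde{Q}$ together with this orientation-reversing symmetry forces $\widetilde{Q}$ to be constant, i.e.\ $Q\equiv C$ on $\mathbb{S}^n$ — contradicting the hypothesis.

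The step I expect to be the main obstacle is making this restart of the moving-sphere process rigorous, namely controlling the sign of the source term $\bigl(\widetilde{Q}(\lambda^2/|x|)-\widetilde{Q}(|x|)\bigr)v_\lambda^{p}$ on the narrow region: this is precisely where the hypothesis ``$Q$ monotone on $\{Q>0\}$'' (as opposed to ``$Q$ monotone'') is used, matched against the orientation-reversing inversion $r\mapsto\lambda^2/r$. When $Q$ changes sign on $\mathbb{S}^n$ — the case $a>0$ — one additionally has to rule out that the process stalls while the critical sphere $S_{\lambda_0}(0)$ still lies inside $\{\widetilde{Q}\le 0\}$, where $\widetilde{Q}$ is unconstrained; this should follow from the fact that $\sup v$ is attained inside $\{\widetilde{Q}>0\}$ together with a comparison argument, but it is the delicate point. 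Two further technical items need care: the strong maximum principle at the origin, where the Kelvin transform $v_{\lambda_0}$ is only known to be bounded (one must preclude an interior zero of $w_{\lambda_0}$ there before asserting $w_{\lambda_0}>0$), and the verification of the conformal covariance identity for $(-\triangle)^{\alpha/2}$ under Kelvin transforms, together with the induced decay and regularity of $v$ at infinity, which underpin the entire reduction. For $\alpha=2$ the operator is local and Theorem~\ref{narrow-region-principle0} is replaced by the classical narrow region maximum principle, recovering the result of Chen–Li.
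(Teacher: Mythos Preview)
Your overall scheme (stereographic projection, then moving spheres about the origin with Theorem~\ref{narrow-region-principle0} as engine) matches the paper's, but your choice of pole and direction of motion differ, and this is precisely what manufactures the obstacle you flag as ``the delicate point''. The paper projects from the \emph{opposite} pole, so that after a dilation $\widetilde{Q}(r)>0$, $\widetilde{Q}'(r)\le 0$ on $(0,1)$ while $\widetilde{Q}(r)\le 0$ on $[1,\infty)$: the positive set of $\widetilde{Q}$ is the unit \emph{ball}, not an exterior region. One then starts at $\lambda=1$, where $\widetilde{Q}(1/r)\le 0<\widetilde{Q}(r)$ for $r<1$ gives $(-\triangle)^{\alpha/2}w_1<0$ on $B_1$ outright, and the simple maximum principle yields $w_1<0$. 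Spheres are moved \emph{inward}: for every $0<\lambda\le 1$ and $r<\lambda$, either $\lambda^2/r<1$ (then $\widetilde{Q}(\lambda^2/r)\le\widetilde{Q}(r)$ by monotonicity on $(0,1)$) or $\lambda^2/r\ge 1$ (then $\widetilde{Q}(\lambda^2/r)\le 0<\widetilde{Q}(r)$); in either case the source term has the correct sign on the \emph{entire} ball, so the narrow region principle drives $\lambda$ all the way down to $\lambda_0=0$. The decay then forces $u(0)\ge M_0/\lambda^{n-\alpha}\to\infty$, the contradiction. No sphere of radius $\le 1$ ever touches $\{\widetilde{Q}\le 0\}$, so there is nothing to ``rule out''.

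In your coordinates the gap is genuine: for $\lambda<a$ the annulus $\lambda^2/a\le r\le\lambda$ lies entirely in $\{\widetilde{Q}\le 0\}$, where no monotonicity is assumed, so neither the start at small $\lambda$ nor a restart at a putative $\lambda_0\le a$ is available, and I do not see how the location of $\sup v$ supplies the missing sign. If you wish to keep your pole, the clean repair is to start at $\lambda=a$ rather than at small $\lambda$: for $r<a$ one has $\widetilde{Q}(r)\le 0<\widetilde{Q}(a^2/r)$, hence $(-\triangle)^{\alpha/2}w_a>0$ on $B_a$ and the simple maximum principle gives $w_a>0$; thereafter, for every $\lambda\ge a$ and $r<\lambda$, $\lambda^2/r>\lambda\ge a$, so the source term is nonnegative throughout $B_\lambda$ and the outward continuation runs unobstructed to a finite $\lambda_0>a$, after which your conclusion $w_{\lambda_0}\equiv 0\Rightarrow Q\equiv C$ goes through. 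That is exactly the paper's argument read through the inversion $r\mapsto 1/r$.
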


For the precise definition of $P_{\alpha}$, please see section 4. In particular, it is the conformal Laplacian when $\alpha =2$ and the Penitz operator for $\alpha =4$.

\begin{remark}
In \cite{JLX1}, Jin, Li, and Xiong obtained a necessary condition--a Kazdan-Warner type identity
for (\ref{PQC}) to have a positive solution (see Proposition A.1 there). In the case $Q$ is rotationally symmetric, the condition becomes that, in order (\ref{PQC}) to have a positive solution,  $Q$ must not be monotone. Our Theorem \ref{prescribing-Q0} actually provides a stronger necessary condition that
\begin{equation}
 Q \mbox{ must not be monotone in the region where it is positive.}
 \label{NC}
 \end{equation}
 The significant part of this stronger necessary condition is that it is almost the sufficient condition to guarantee the existence of a solution. Actually, in the case $\alpha =2$, it is proved that, besides (\ref{NC}), if further assume that $Q$ is non-degenerate, then problem (\ref{PQC}) possesses a positive solution for $n \geq 2$ (see  \cite{XY} for $n=2$ and \cite{ChLi1} for $n \geq 3$). We believe that, the same existence results can be established for all real values of $\alpha$ between $0$ and $2$.

\end{remark}

Finally, in Section 5, combining the method of moving spheres and of moving planes, we present an alternative proof for the non-existence of positive solutions for the semilinear equation in a half space
\begin{equation}
\begin{cases}
(-\triangle)^{\alpha/2}u(x)=u^p(x),\ x\in\dR_+^n,\ 1<p\leq\frac{n+\alpha}{n-\alpha}\\
u(x)=0, \ x\not\in\dR_+^n,
\end{cases}\label{upper-half-space0}
\end{equation}
in both subcritical and critical cases.

\begin{mthm}
If $u\in L_{\alpha} \cap C_{loc}^{1,1}(\dR^n_+)$ is a nonnegative solution to (\ref{upper-half-space0}), then $u\equiv0$ in $\dR^n_+$.
\end{mthm}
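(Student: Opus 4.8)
The plan is to suppose $u\not\equiv 0$ and reach a contradiction, first using the method of moving planes to make $u$ positive and monotone, and then using that monotonicity to run the method of moving spheres about points of $\partial\dR^n_+$, where the scheme would otherwise break down. To begin, if $u(\bar x)=0$ at some $\bar x\in\dR^n_+$, then $0=u^p(\bar x)=(-\triangle)^{\alpha/2}u(\bar x)=C_{n,\alpha}\PV\int_{\dR^n}\frac{-u(y)}{|\bar x-y|^{n+\alpha}}\,dy\le 0$ with equality only if $u\equiv 0$; so we may assume $u>0$ in $\dR^n_+$.

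Next I would carry out moving planes in the $x_n$-direction: with $T_\lambda=\{x_n=\lambda\}$, $x^\lambda=(x',2\lambda-x_n)$ and $w_\lambda(x)=u(x^\lambda)-u(x)$, one has $(-\triangle)^{\alpha/2}w_\lambda=c_\lambda w_\lambda$ on $\Sigma_\lambda:=\{0<x_n<\lambda\}$ with $c_\lambda\ge 0$, while $w_\lambda>0$ on $\{x_n\le 0\}$. For $\lambda$ small $\Sigma_\lambda$ is a narrow slab, so the planar narrow region maximum principle of \cite{CLL} (the analogue of Theorem \ref{narrow-region-principle0}) gives $w_\lambda\ge 0$; enlarging $\lambda$ by the usual continuation, a stall at finite $\lambda_0$ would force either $w_{\lambda_0}\equiv 0$ (then $u$ vanishes on the open set $\{x_n>2\lambda_0\}$, hence $u\equiv 0$ as above) or $w_{\lambda_0}>0$ inside $\Sigma_{\lambda_0}$ (then one can push on). Thus $w_\lambda\ge 0$ for all $\lambda>0$, i.e. $u(x',\cdot)$ is non-decreasing on $(0,\infty)$ for every $x'$.

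Now fix $x_0\in\partial\dR^n_+$ and let $u_{x_0,\lambda}$ be the Kelvin transform about $S_\lambda(x_0)$; since $x_0\in\partial\dR^n_+$ the inversion preserves $\dR^n_+$ and its complement, so $u_{x_0,\lambda}$ again vanishes outside $\dR^n_+$ and $(-\triangle)^{\alpha/2}u_{x_0,\lambda}(x)=\big(\tfrac{\lambda}{|x-x_0|}\big)^{\tau}u_{x_0,\lambda}^p(x)$ with $\tau=(n+\alpha)-p(n-\alpha)\ge 0$. Put $w_\lambda=u_{x_0,\lambda}-u$; it satisfies the antisymmetry of Theorem \ref{narrow-region-principle0}, and on $B_\lambda(x_0)$, where $\big(\tfrac{\lambda}{|x-x_0|}\big)^{\tau}\ge 1$, one gets $(-\triangle)^{\alpha/2}w_\lambda+c_\lambda w_\lambda\ge 0$ with $c_\lambda$ bounded below on each narrow shell $\{\lambda-\delta<|x-x_0|<\lambda\}$. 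The delicate point is starting the sweep: near $x_0$ one has $u\to u(x_0)=0$, so the usual blow-up of the Kelvin transform at the center is not automatic; but the monotonicity just proved rescues it, since along any ray into $\dR^n_+$ the values of $u$ far out are bounded below, forcing $u_{x_0,\lambda}(x)\to\infty$ as $x\to x_0$ inside $\dR^n_+$ while $w_\lambda\equiv 0$ outside, hence $w_\lambda\ge 0$ on the inner core; combined with Theorem \ref{narrow-region-principle0} on a narrow shell this yields $w_\lambda\ge 0$ in $B_\lambda(x_0)$ for $\lambda$ small. If the sweep stalled at finite $\bar\lambda$, then $w_{\bar\lambda}\equiv 0$, i.e. $u=u_{x_0,\bar\lambda}$; comparing the two equations forces $\big(\tfrac{\bar\lambda}{|x-x_0|}\big)^{\tau}\equiv 1$ on $\dR^n_+$, impossible if $\tau>0$, and if $\tau=0$ then $u(x)=\big(\tfrac{\bar\lambda}{|x-x_0|}\big)^{n-\alpha}u(x^{\bar\lambda})\to\infty$ as $x\to x_0$ along the inward normal (again by monotonicity), contradicting $u(x_0)=0$. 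So no stall occurs: $w_\lambda\ge 0$ in $B_\lambda(x_0)$ for every $\lambda>0$ and every $x_0\in\partial\dR^n_+$.

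Using $\lambda^2=|x-x_0|\,|x^\lambda-x_0|$, the inequality $w_\lambda\ge 0$ rewrites as: $r\mapsto r^{(n-\alpha)/2}u(x_0+r\omega)$ is non-decreasing for every $x_0\in\partial\dR^n_+$ and every $\omega\in\dS^{n-1}$ with $\omega_n>0$. Taking the ray from $x_0=(x_0',0)$ through $\bar x=(\bar x',h)\in\dR^n_+$ and evaluating at twice the distance gives $u(2\bar x'-x_0',2h)\ge 2^{-(n-\alpha)/2}u(\bar x)$; letting $x_0'$ sweep $\dR^{n-1}$ and then taking the sup over $\bar x'$ yields $u(z',2h)\ge 2^{-(n-\alpha)/2}\sup_{x'}u(x',h)>0$ for all $z'$, so $\sup_{x'}u(x',h)<\infty$ for every $h$ and, together with the monotonicity, $u\ge c_0>0$ on $\{x_n\ge 1\}$. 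Such a $u$ cannot solve our problem: a blow-up at points $(x_k',k)$ with $u(x_k',k)\to\infty$ would, by interior estimates and Theorem \ref{classification-theorem0}, produce a positive entire solution of $(-\triangle)^{\alpha/2}U=U^p$, ruling out unboundedness in the subcritical range; once $u$ is bounded, letting $x_n\to\infty$ and using the monotonicity gives (after noting that $(-\triangle)^{\alpha/2}$ acting on a function of $x'$ only is a positive multiple of $(-\triangle_{\dR^{n-1}})^{\alpha/2}$) a bounded entire solution $u_\infty=u_\infty(x')$ on $\dR^{n-1}$ of a \emph{subcritical} equation with $u_\infty\ge c_0$, which is impossible (evaluate the equation at a point where $u_\infty$ approaches its infimum). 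The hard part will be the interplay near $\partial\dR^n_+$ — exactly why the moving-plane monotonicity must precede and be used inside the moving-sphere scheme — together with the final non-existence step in the critical case, where unboundedness must instead be excluded by a more careful blow-up analysis and the degeneracy $u\asymp\dist(\cdot,\partial\dR^n_+)^{\alpha/2}$ on the boundary has to be controlled.
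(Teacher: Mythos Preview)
Your proof reverses the order of the two methods, and this is where the genuine gap lies. In your first step you move planes $T_\lambda=\{x_n=\lambda\}$ and invoke the planar narrow region principle on the unbounded slab $H_\lambda=\{0<x_n<\lambda\}$. But that principle (Proposition \ref{p:narrow-region-hyperplane} here, or its analogue in \cite{CLL}) for an unbounded region requires
\[
\liminf_{|x|\to\infty}w_\lambda(x)\ge 0,
\]
and without any a priori decay of $u$ in the $x'$-directions you have no way to verify this: the negative infimum of $w_\lambda$ could be approached only as $|x'|\to\infty$, and then the contradiction-at-a-minimum argument never gets started. Everything downstream in your scheme---the monotonicity in $x_n$, the no-stall argument for $\tau=0$, and the lower bound $u\ge c_0$ on $\{x_n\ge 1\}$---depends on this step.

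The paper avoids this precisely by running the moving spheres first. Its dichotomy says either (a) every boundary center $x_0$ has a finite critical scale, which forces $u_{\mu_0}\equiv u$ about some center and hence the decay $|x|^{n-\alpha}u(x)\to c_0>0$; or (b) every sphere moves to infinity. In case (a) the decay just obtained supplies the missing $\liminf w_\lambda\ge 0$, and \emph{only then} are the planes moved to get monotonicity in $x_n$, which immediately contradicts the decay. In case (b) one differentiates the inequality $w_\lambda\ge 0$ as $|y|\to\infty$ along $\partial\dR^n_+$ (exactly the computation in Proposition \ref{p:classification}) to conclude directly that $u=u(x_n)$, and finishes by citing \cite{CFY}. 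So the planes are used only after decay is already in hand; your opposite ordering is the essential difference. Two smaller points: starting the sphere sweep does \emph{not} need monotonicity---for $\lambda$ small the whole ball $B_\lambda(x_0)$ is itself a narrow region and Theorem \ref{narrow-region-principle0} applies directly with $c(x)=-p\psi^{p-1}$ bounded below by $-p\|u\|_{L^\infty(B_\lambda(x_0))}^{p-1}$; and your endgame (uniform lower bound, then pass to a function $u_\infty(x')$) is different from and less complete than the paper's reduction to $u=u(x_n)$, particularly in the critical case where you yourself flag the blow-up analysis as unfinished.
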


\section{A Narrow Region Maximum Principle and the Method of Moving Spheres}
\label{s:maximum-principle}

In this section, we give the technical preparations for developing the method of moving spheres for the non-local operator $(-\triangle)^{\alpha/2}$.
Given $0<\alpha<2$, $\lambda>0$ and $x_0\in\dR^n$, we define the Kelvin transform of a function $u:\dR^n\to\dR^1$ centered at $x_0$ as follows, \begin{equation}u_{\lambda}(x)\equiv(\frac{\lambda}{|x-x_0|})^{n-\alpha}u(x^{\lambda}),
\label{Kelvin-transform}\end{equation}
where $$x^{\lambda}\equiv\frac{\lambda^2 (x-x_0)}{|x-x_0|^2}+x_0$$
is the inversion of $x$ with respect to the sphere
$$S_{\lambda}(x_0) \equiv \{ x \mid |x-x_0| = \lambda \}.$$

Without loss of generality, here we take $x_0=0$.
We start with a simple maximum principle
for anti-symmetric functions.

\begin{proposition}[Simple Maximum Principle]\label{strong-max-principle}
Let $\Omega$ be an open subset of $B_{\lambda}(0^n)$. Assume that $w\in L_{\alpha}\cap C_{loc}^{1,1}(\Omega)$ and is lower semi-continuous on $\bar{\Omega}$. If
\begin{equation}
\begin{cases}
(-\triangle)^{\alpha/2}w(x)\geq 0,\quad x\in\Omega,\\
w(x)\geq 0,\quad x\in B_{\lambda}(0^n)\setminus\Omega,\\
w(x)=-w_{\lambda}(x),\quad x\in B_{\lambda}(0^n),
\end{cases}
\end{equation}
then $w(x)\geq 0$ for every $x\in\Omega$. Moreover, if $w(x)=0$ for some $x\in\Omega$, then $w(x)\equiv0$ for every $x\in\dR^n$.
\end{proposition}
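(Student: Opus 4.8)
The plan is to prove the Simple Maximum Principle by a standard contradiction argument exploiting the defining integral formula for $(-\triangle)^{\alpha/2}$ together with the anti-symmetry of $w$ under the Kelvin transform. Suppose the conclusion fails, so that $\inf_{\Omega} w < 0$. Since $w$ is lower semi-continuous on $\bar\Omega$ and, by the anti-symmetry relation $w(x)=-w_\lambda(x)$, the function $w$ extends continuously (with value $0$) to the sphere $S_\lambda(0^n)$ as well as vanishing at infinity in the appropriate weighted sense, the negative infimum is attained: there exists $\bar x \in \Omega$ with $w(\bar x) = \min_{\bar\Omega} w < 0$. At such an interior minimum point the pointwise formula \eqref{1.1} gives
\[
(-\triangle)^{\alpha/2}w(\bar x) = C_{n,\alpha}\,\PV\int_{\dR^n}\frac{w(\bar x)-w(y)}{|\bar x - y|^{n+\alpha}}\,dy.
\]

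The key step is to show this integral is strictly negative, which contradicts the hypothesis $(-\triangle)^{\alpha/2}w(\bar x)\ge 0$. I would split $\dR^n$ into $B_\lambda(0^n)$ and its complement, and on $B_\lambda(0^n)$ use the change of variables $y \mapsto y^\lambda$ to fold the exterior contribution back inside. A direct computation of the Jacobian of the inversion, combined with the identity $w(y^\lambda) = -(\tfrac{|y|}{\lambda})^{n-\alpha}w(y)$ (equivalently $w_\lambda = -w$), yields a representation of the form
\[
(-\triangle)^{\alpha/2}w(\bar x) = C_{n,\alpha}\int_{B_\lambda(0^n)}\left(w(\bar x)-w(y)\right)K(\bar x,y)\,dy
 + C_{n,\alpha}\int_{B_\lambda(0^n)}\left(w(\bar x)+\Big(\tfrac{|y|}{\lambda}\Big)^{n-\alpha}w(y)\right)\widetilde K(\bar x, y)\,dy,
\]
with explicit strictly positive kernels $K,\widetilde K$ on $B_\lambda(0^n)$ (this is the standard computation underlying the anti-symmetric maximum principle for the fractional Laplacian, cf. \cite{CLL}). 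Since $w(\bar x)\le w(y)$ for all $y$ — using the minimality on $\bar\Omega$ and the hypothesis $w\ge 0$ on $B_\lambda(0^n)\setminus\Omega$ together with $w(\bar x)<0$ — the first integrand is $\le 0$. For the second integrand, $w(\bar x) < 0$ and $(\tfrac{|y|}{\lambda})^{n-\alpha} \le 1$ with $w(y)\ge w(\bar x)$, so $w(\bar x)+(\tfrac{|y|}{\lambda})^{n-\alpha}w(y)$; one checks it is $\le w(\bar x)(1 - (\tfrac{|y|}{\lambda})^{n-\alpha}) \le 0$ when $w(y)\le 0$, and when $w(y)\ge 0$ one bounds it by $w(\bar x) + w(y)$ — in all cases the careful bookkeeping shows the total is $\le 0$, and is \emph{strictly} negative on a set of positive measure (e.g. near $\bar x$, where $w(y)$ is close to $w(\bar x)<0$). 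Hence $(-\triangle)^{\alpha/2}w(\bar x)<0$, a contradiction, so $w\ge 0$ on $\Omega$.

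For the strong maximum principle part: if $w(x_0')=0$ at some $x_0'\in\Omega$, then $x_0'$ is a minimum point of the now-nonnegative $w$, and rerunning the above representation at $x_0'$ forces $w(x_0')-w(y) \equiv 0$ and the second integrand $\equiv 0$ almost everywhere, whence $w\equiv 0$ on $B_\lambda(0^n)$; the anti-symmetry relation $w=-w_\lambda$ then propagates this to $w\equiv 0$ on all of $\dR^n$.

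The main obstacle I anticipate is making the attainment of the negative minimum rigorous: $\Omega$ need only be open and $w$ is merely lower semi-continuous on $\bar\Omega$, so one must argue that a minimizing sequence cannot escape to the boundary $\partial\Omega$ (where either $w\ge 0$ from the hypothesis on $B_\lambda(0^n)\setminus\Omega$, or $w=0$ on $S_\lambda(0^n)$ by anti-symmetry) nor to infinity (controlled by the $L_\alpha$ condition and the Kelvin-transform structure, which ties the behavior near infinity to the vanishing behavior at the center $0^n$). Once the minimum point is secured in the open set $\Omega$, the rest is the kernel computation, which is routine but must be done carefully to track the strict sign.
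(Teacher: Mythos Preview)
Your overall strategy matches the paper's: assume a negative interior minimum $\bar x\in\Omega$, split the defining integral over $B_\lambda(0^n)$ and its complement, fold the exterior piece back inside via the inversion $y\mapsto y^\lambda$ together with the anti-symmetry $w(y^\lambda)=-(|y|/\lambda)^{n-\alpha}w(y)$, and compare kernels using the elementary identity
\[
\Big|\tfrac{|z|\bar x}{\lambda}-\tfrac{\lambda z}{|z|}\Big|^2-|\bar x-z|^2=\frac{(|\bar x|^2-\lambda^2)(|z|^2-\lambda^2)}{\lambda^2}>0.
\]
So there is no genuinely different idea here.

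There is, however, a real gap in your sign analysis. You claim that each of your two integrands is separately $\le 0$, and in particular that the second one, $w(\bar x)+(\tfrac{|y|}{\lambda})^{n-\alpha}w(y)$, is nonpositive. This is false in general: at points $y\in B_\lambda(0^n)$ where $w(y)>|w(\bar x)|$ (which certainly can happen, since $w\ge 0$ on $B_\lambda\setminus\Omega$ and is unbounded above a priori), the quantity $w(\bar x)+(\tfrac{|y|}{\lambda})^{n-\alpha}w(y)$ is positive. Your ``careful bookkeeping'' does not salvage this; the two integrals must be combined, and then one must use that the reflected kernel is strictly smaller than the direct one.

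The paper avoids this difficulty with a simple but decisive device you are missing: it shifts to $\tilde w(x)\equiv w(x)-w(\bar x)$, so that $\tilde w\ge 0$ on all of $B_\lambda(0^n)$ and $\tilde w(\bar x)=0$. The anti-symmetry of $w$ then yields the \emph{strict} inequality
\[
\Big(\tfrac{\lambda}{|x|}\Big)^{n-\alpha}\tilde w(x^\lambda)
=-\tilde w(x)-\Big[1+\Big(\tfrac{\lambda}{|x|}\Big)^{n-\alpha}\Big]w(\bar x)
>-\tilde w(x),
\]
since $w(\bar x)<0$. After the change of variables this turns the exterior integral into something strictly smaller than $\int_{B_\lambda}\tilde w(z)\,|\tfrac{|z|\bar x}{\lambda}-\tfrac{\lambda z}{|z|}|^{-(n+\alpha)}\,dz$, and combining with the interior piece gives a single integral of $\tilde w(z)\ge 0$ against the \emph{negative} kernel difference, hence $(-\triangle)^{\alpha/2}\tilde w(\bar x)<0$. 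The shift is exactly what makes the signs line up without case analysis; once you insert it, your argument becomes the paper's. Your treatment of the strong maximum principle part and your remarks on attainment of the minimum (which is unproblematic since $\bar\Omega\subset\overline{B_\lambda(0^n)}$ is compact and $w$ is lsc with $w\ge 0$ on $\partial\Omega$) are fine.
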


\begin{proof}

We argue by contradiction. Suppose that there is some $x_0\in\Omega$ such that $w(x_0)\equiv\min\limits_{x\in\Omega}w(x)<0$.

Let $\tilde{w}(x)\equiv w(x)-w(x_0)$, then immediately the following holds,
\begin{equation}
\begin{cases}
(-\triangle)^{\alpha/2}\tilde{w}(x)=(-\triangle)^{\alpha/2}w(x)\geq 0,\ \forall x\in\Omega,\\
\tilde{w}(x_0)=0,\\
\tilde{w}(x)\geq 0, \ x\in B_{\lambda}(0^n).
\end{cases}
\end{equation}

By the anti-symmetry assumption $w(x)=-w_{\lambda}(x)$, it holds that
\begin{eqnarray}
\Big(\frac{\lambda}{|x|}\Big)^{n-\alpha}\tilde{w}(x^{\lambda})&=&\Big(\frac{\lambda}{|x|}\Big)^{n-\alpha}w(x^{\lambda})-\Big(\frac{\lambda}{|x|}\Big)^{n-\alpha}w(x_0)\nonumber\\
&=&-w(x)+w(x_0)-\Big[1+\Big(\frac{\lambda}{|x|}\Big)^{n-\alpha}\Big]w(x_0)\nonumber\\
&> &-\tilde{w}(x).\label{anti-symmetry-estimate}
\end{eqnarray}

By straightforward calculations,
\begin{eqnarray}
(-\triangle)^{\alpha/2}\tilde{w}(x_0)&=&\int_{\dR^n}\frac{\tilde{w}(x_0)-\tilde{w}(y)}{|x_0-y|^{n+\alpha}}dy\nonumber\\
&=&\int_{B_{\lambda}(0^n)}\frac{-\tilde{w}(y)}{|x_0-y|^{n+\alpha}}dy+\int_{\dR^n\setminus B_{\lambda}(0^n)}\frac{-\tilde{w}(y)}{|x_0-y|^{n+\alpha}}dy\nonumber\\
&\equiv& I_1+I_2.\label{fractional-laplacian-at-x_0}\end{eqnarray}

Let $y\equiv\frac{\lambda^2 z}{|z|^2}$, then by (\ref{anti-symmetry-estimate}), it holds that
\begin{eqnarray}
I_2&=&\int_{B_{\lambda}(0^n)}\frac{-\tilde{w}(\frac{\lambda^2 z}{|z|^2})}{\Big|x_0-\frac{\lambda^2 z}{|z|^2}\Big|^{n+\alpha}}\cdot\frac{\lambda^{2n}}{|z|^{2n}}dz\nonumber\\
&<&\int_{B_{\lambda}(0^n)}\frac{\tilde{w}(z)}{\Big|x_0-\frac{\lambda^2 z}{|z|^2}\Big|^{n+\alpha}}\cdot\Big(\frac{\lambda}{|z|}\Big)^{n+\alpha}dz.
\end{eqnarray}

By (\ref{fractional-laplacian-at-x_0}), we have that
\begin{equation}
(-\triangle)^{\alpha/2}\tilde{w}(x_0)<\int_{B_{\lambda}(0^n)}\Big(\frac{1}{\big|\frac{|z|x_0}{\lambda}-\frac{\lambda z}{|z|}\big|}-\frac{1}{|x_0-z|^{n+\alpha}}\Big)\tilde{w}(z)dz.
\end{equation}
Notice that, for $z\in B_{\lambda}(0^n)$,
\begin{eqnarray}
\Big|\frac{|z|x_0}{\lambda}-\frac{\lambda z}{|z|}\Big|^2-|x_0-z|^2=\frac{(|x_0|^2-\lambda^2)(|z|^2-\lambda^2)}{\lambda^2}> 0,
\end{eqnarray}
which implies that
\begin{equation}
(-\triangle)^{\alpha/2}\tilde{w}(x_0)<0.
\end{equation}
A contradiction.

Now we assume that $w(x_0)=0$. Then by the same calculations as above, we have
\begin{equation}
0\leq(-\triangle)^{\alpha/2}w(x_0)=\int_{B_{\lambda}(0^n)}\Big(\frac{1}{\big|\frac{|z|x_0}{\lambda}-\frac{\lambda z}{|z|}\big|^{n+\alpha}}-\frac{1}{|x_0-z|^{n+\alpha}}\Big)w(z)dz\leq 0.
\end{equation}
Therefore, $w(x)=0$ for almost every $x\in B_{\lambda}(0^n)$.
By the anti-symmetry assumption $w(x)=-w_{\lambda}(x)$, it holds that
$w(x)=0$, for almost every $x\in\dR^n$.

\end{proof}

The following {\it narrow region principle} is the key to carry out the method of moving spheres.

\begin{theorem}[Narrow Region Principle] \label{narrow-region-principle}Let $w\in L_{\alpha}\cap C_{loc}^{1,1}(\Omega)$ be lower semi-continuous on $\bar{\Omega}$. If $c(x)$ is bounded from below in $\Omega$ and
\begin{equation}
\begin{cases}
(-\triangle)^{\alpha/2}w(x)+c(x)w(x)\geq 0,\ x\in\Omega\subset B_{\lambda}(0^n),\nonumber\\
w(x)\geq 0,\  x\in B_{\lambda}(0^n)\setminus\Omega,
\nonumber\\
w(x)=- w_{\lambda}(x),\ x\in B_{\lambda}(0^n),
\end{cases}
\end{equation}
Then there exists some sufficiently small $\delta>0$ such that the following holds. If $$\Omega\subset A_{\lambda-\delta,\lambda}(0^n) \equiv\{x\in\dR^n|\lambda-\delta<|x|<\lambda\},$$
 then we have
\begin{equation}
\inf\limits_{x\in\Omega}w(x)\geq 0.
\end{equation}
Furthermore, if $w(x)=0$ for some $x\in\Omega$, then $w(x)=0$
for almost every $x\in\dR^n$.
\end{theorem}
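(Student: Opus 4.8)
The plan is to argue by contradiction, following the template of the Simple Maximum Principle (Proposition \ref{strong-max-principle}) but tracking the lower-order term $c(x)w(x)$ carefully. Suppose the conclusion fails, so $\inf_{\Omega} w < 0$. Since $w$ is lower semi-continuous on $\bar\Omega$ and $\bar\Omega$ is compact (it sits inside $\bar B_\lambda(0^n)$), the infimum is attained at some $x_0 \in \bar\Omega$; the anti-symmetry condition together with $w \geq 0$ on $B_\lambda(0^n)\setminus\Omega$ forces $x_0$ to be an interior point of $\Omega$ where $w(x_0) < 0$. At this point the differential inequality gives
\begin{equation}
(-\triangle)^{\alpha/2}w(x_0) \geq -c(x_0)w(x_0).
\end{equation}

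Next I would estimate $(-\triangle)^{\alpha/2}w(x_0)$ from above. Exactly as in the proof of Proposition \ref{strong-max-principle}, split the defining integral into the piece over $B_\lambda(0^n)$ and the piece over its complement, use the substitution $y = \lambda^2 z/|z|^2$ and the anti-symmetry relation $w(x) = -w_\lambda(x)$ to fold the exterior integral back inside the ball, and invoke the elementary identity
\begin{equation}
\Big|\tfrac{|z|x_0}{\lambda} - \tfrac{\lambda z}{|z|}\Big|^2 - |x_0 - z|^2 = \frac{(|x_0|^2 - \lambda^2)(|z|^2 - \lambda^2)}{\lambda^2} > 0
\end{equation}
for $z \in B_\lambda(0^n)$. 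This time, however, we do not get a strict sign immediately, because $w$ need not be nonnegative throughout $B_\lambda(0^n)$ — only on $B_\lambda(0^n)\setminus\Omega$. The key point is that the $z$-integral is really only dangerous over $\Omega$ itself, and on $\Omega \subset A_{\lambda-\delta,\lambda}(0^n)$ one has both $w(z) \geq w(x_0)$ (by minimality) and a smallness coming from the thinness of the shell. Concretely, one isolates the term
\begin{equation}
C_{n,\alpha}\int_{\Omega} \frac{w(x_0) - w(z)}{|x_0 - z|^{n+\alpha}}\,dz \leq 0
\end{equation}
and bounds the contribution of the ``good'' region $B_\lambda(0^n)\setminus\Omega$ together with the kernel-difference terms; the upshot should be an estimate of the form $(-\triangle)^{\alpha/2}w(x_0) \leq -C\,\delta^{-\alpha}\,w(x_0)$ for some structural constant $C>0$ (a lower bound for how much the nonlocal operator ``sees'' the negative dip when the region is narrow), or equivalently $(-\triangle)^{\alpha/2}w(x_0) \leq C\,\delta^{-\alpha}\,|w(x_0)| \cdot(-1)$ — i.e. a quantity that is very negative when $\delta$ is small.

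Combining the two estimates yields $-c(x_0)w(x_0) \leq -C\delta^{-\alpha}w(x_0)$, and since $w(x_0)<0$ this rearranges (dividing by $w(x_0)<0$, flipping the inequality) to $c(x_0) \geq C\delta^{-\alpha}$. But $c$ is bounded below — and crucially, to get a contradiction we also need $c$ effectively bounded above at the relevant point, or rather we need the inequality to run the other way; the honest statement is that one gets $C\delta^{-\alpha} \leq -c(x_0) \leq \sup_\Omega(-c) =: C_c$, which is impossible once $\delta$ is chosen small enough that $C\delta^{-\alpha} > C_c$. Hence no such $x_0$ exists and $w \geq 0$ on $\Omega$. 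For the strong-maximum-principle addendum, if $w(x_0)=0$ at some interior $x_0\in\Omega$, then rerunning the same computation with the now-established sign $w\geq 0$ on all of $B_\lambda(0^n)$ gives $0 \leq (-\triangle)^{\alpha/2}w(x_0) = C_{n,\alpha}\int_{B_\lambda(0^n)}\big(|\tfrac{|z|x_0}{\lambda}-\tfrac{\lambda z}{|z|}|^{-(n+\alpha)} - |x_0-z|^{-(n+\alpha)}\big)w(z)\,dz \leq 0$, forcing $w \equiv 0$ a.e. in $B_\lambda(0^n)$ and then, by anti-symmetry, a.e. in $\dR^n$.

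The main obstacle I anticipate is the second step: producing the quantitative lower bound $(-\triangle)^{\alpha/2}w(x_0) \leq -C\delta^{-\alpha}w(x_0)$ with a constant $C$ independent of $w$ and of the particular narrow region. One has to show that shrinking $\Omega$ into a thin shell near $S_\lambda(0^n)$ genuinely amplifies the nonlocal operator's response to the negative minimum — intuitively because $x_0$ is then surrounded on almost all sides by points where $w$ is nonnegative (either in $B_\lambda(0^n)\setminus\Omega$, which is large, or outside the ball where anti-symmetry makes the reflected values favorable), so the kernel integral $\int |x_0-y|^{-n-\alpha}(w(x_0)-w(y))\,dy$ is dominated by a large set on which $w(x_0)-w(y) \leq 0$, and the measure-theoretic/kernel estimate must be made uniform. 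Handling the kernel-difference factor (the two competing $|x_0-z|^{-n-\alpha}$ terms) while extracting the $\delta^{-\alpha}$ gain, and making sure the bounded-below hypothesis on $c$ suffices, is where the care is needed.
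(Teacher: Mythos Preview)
Your strategy is the paper's: argue by contradiction at a negative minimum $x_0$, fold the exterior integral back via the inversion $y=\lambda^2 z/|z|^2$ and anti-symmetry, and contrast the resulting upper bound on $(-\triangle)^{\alpha/2}w(x_0)$ against the differential inequality. The place you flag as the ``main obstacle'' is exactly where the paper inserts a small device you did not use: rather than working with $w$ directly, shift to $\tilde w(x)\equiv w(x)-w(x_0)$. Since $x_0$ is in fact a global minimum of $w$ over $B_\lambda(0^n)$ (it minimizes over $\Omega$, and $w\ge 0 > w(x_0)$ on $B_\lambda(0^n)\setminus\Omega$), one has $\tilde w\ge 0$ on \emph{all} of $B_\lambda(0^n)$, so your worry that ``$w$ need not be nonnegative throughout $B_\lambda(0^n)$'' evaporates. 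The anti-symmetry relation for $\tilde w$ then picks up an explicit constant correction,
\[
\Big(\frac{\lambda}{|x|}\Big)^{n-\alpha}\tilde w(x^\lambda)=-\tilde w(x)-\Big[1+\Big(\frac{\lambda}{|x|}\Big)^{n-\alpha}\Big]w(x_0),
\]
and this constant term is precisely what produces the $\delta^{-\alpha}$ gain. After the reflection the kernel-difference piece combines exactly as in Proposition~\ref{strong-max-principle} to give a sign $\le 0$ (no $\Omega$-versus-complement splitting needed), and one is left with
\[
(-\triangle)^{\alpha/2}w(x_0)\ \le\ w(x_0)\int_{\dR^n\setminus B_\lambda(0^n)}\frac{dy}{|x_0-y|^{n+\alpha}}\ \le\ \frac{C}{\delta^{\alpha}}\,w(x_0),
\]
the last inequality because $\dist(x_0,\partial B_\lambda)<\delta$. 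So the $\delta^{-\alpha}$ does not come from any interaction with the kernel-difference factor at all; it comes purely from the residual $w(x_0)$ term and the proximity of $x_0$ to the sphere. Your final combination is then exactly right (modulo the sign slip you caught yourself): $-c(x_0)\ge C\delta^{-\alpha}$ contradicts the lower bound on $c$ once $\delta$ is small. The ``furthermore'' clause is handled as you say; the paper does not re-argue it, since once $w\ge 0$ on $B_\lambda(0^n)$ is established the computation is identical to that in Proposition~\ref{strong-max-principle}.
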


\begin{proof}

We argue by contradiction. Suppose there exists some $x_0\in\Omega$ such that
\begin{equation}
w(x_0)=\min\limits_{x\in\Omega}w(x)<0.
\end{equation}
Let $\tilde{w}(x)=w(x)-w(x_0)$, then $\tilde{w}(x_0)=0$ and
\begin{equation}
(-\triangle)^{\alpha/2}\tilde{w}(x)=(-\triangle)^{\alpha/2}w(x).
\end{equation}
By the anti-symmetry condition of $w$, we have that
\begin{equation}
\Big(\frac{\lambda}{|x|}\Big)^{n-\alpha}\tilde{w}(x^{\lambda})=-\tilde{w}(x)-\Big[1+\Big(\frac{\lambda}{|x|}\Big)^{n-\alpha}\Big]w(x_0).\label{narrow-anti-symmetry}\end{equation}

By straightforward calculations,
\begin{eqnarray}
(-\triangle)^{\alpha/2}\tilde{w}(x_0)&=&\int_{\dR^n}\frac{\tilde{w}(x_0)-\tilde{w}(y)}{|x_0-y|^{n+\alpha}}dy\nonumber\\
&=&\int_{B_{\lambda}(0^n)}\frac{-\tilde{w}(y)}{|x_0-y|^{n+\alpha}}dy+\int_{\dR^n\setminus B_{\lambda}(0^n)}\frac{-\tilde{w}(y)}{|x_0-y|^{n+\alpha}}dy\nonumber\\
&\equiv& J_1+J_2.\label{narrow-fractional-laplacian-at-x_0}\end{eqnarray}
By (\ref{narrow-anti-symmetry}), we have that
\begin{eqnarray}
J_2&=&\int_{\dR^n\setminus B_{\lambda}(0^n)}\frac{-\tilde{w}(y)}{|x_0-y|^{n+\alpha}}dy\nonumber\\
&=&\int_{\dR^n\setminus B_{\lambda}(0^n)}\frac{\Big(\frac{\lambda}{|y|}\Big)^{n-\alpha}\tilde{w}(y^{\lambda})}{|x_0-y|^{n+\alpha}}dy+\int_{\dR^n\setminus B_{\lambda}(0^n)}\frac{\Big[1+\Big(\frac{\lambda}{|y|}\Big)^{n-\alpha}\Big]w(x_0)}{|x_0-y|^{n+\alpha}}dy\nonumber\\
&\equiv& J_{21}+J_{22}.
\end{eqnarray}
Let $y=\frac{\lambda^2 z}{|z|^2}$, then we have that
\begin{eqnarray}
J_1+J_{21}=\int_{B_{\lambda}(0^n)}\Big(\frac{1}{\big|\frac{|z|x_0}{\lambda}-\frac{\lambda z}{|z|}\big|^{n+\alpha}}-\frac{1}{|x_0-z|^{n+\alpha}}\Big)\tilde{w}(z)dz.
\end{eqnarray}
Since $\tilde{w}(z)\geq 0$ for every $z\in B_{\lambda}(0^n)$, so $J_1+J_{21}\leq 0$. Notice that $w(x_0)<0$, then it holds that\begin{equation}
(-\triangle)^{\alpha/2}\tilde{w}(x_0)=(J_1+J_{21})+J_{22}\leq J_{22}\leq w(x_0)\int_{\dR^n\setminus B_{\lambda}(0^n)}\frac{1}{|x_0-y|^{n+\alpha}}dy.
\end{equation}

Now we estimate the right hand side of the above inequality,\begin{eqnarray}
\int_{\dR^n\setminus B_{\lambda}(0^n)}\frac{1}{|x_0-y|^{n+\alpha}}dy&\geq &\int_{(\dR^n\setminus B_{\lambda}(0^n))\cap(B_1(x_0)\setminus B_{\delta}(x_0))}\frac{1}{|x_0-y|^{n+\alpha}}dy\nonumber\\
&\geq&\frac{1}{4}\int_{B_1(x_0)\setminus B_{\delta}(x_0)}\frac{1}{|x_0-y|^{n+\alpha}}dy\nonumber\\
&\geq &\frac{C}{\delta^{\alpha}}.
\end{eqnarray}
The contradiction arises when $\delta>0$ is chosen sufficiently small.
\end{proof}

\section{Classification of Solutions to Semilinear Fractional Equations in $\dR^n$}

In this section, we will apply the tools developed in Section \ref{s:maximum-principle} to classify the non-negative solutions to the following semi-linear elliptic equations.

\begin{equation}
(-\triangle)^{\alpha/2}u(x)=g(u(x)), \ x\in\dR^n.\label{g-eq}
\end{equation}
Applying the method of moving spheres, we obtain the following
classification result for the positive solutions.

\begin{theorem}\label{classification-theorem}
Let $g:\dR_+^1\rightarrow \dR_{+}^1\cup\{0\}$
be a locally bounded function such that
$\frac{g(r)}{r^{p}}$ is non-increasing with $p\equiv\frac{n+\alpha}{n-\alpha}$.

If $u\in L_{\alpha}(\dR^n)\cap C_{loc}^{1,1}(\dR^n)$ is a nonnegative solution to equation (\ref{g-eq}),
then one of the following holds:

$(1)$ For some $C_0\geq0$, $u(x)\equiv C_0$ for every $x\in\dR^n$ and $g(C_0)=0$.

$(2)$ There exists $\beta_1>0$, $\beta_2>0$, $x_0\in\dR^n$ such that
\begin{equation}
u(x)=\frac{\beta_1}{(|x-x_0|^2+\beta_2^2)^{\frac{n-\alpha}{2}}},\ \forall x\in\dR^n,
\end{equation}
and $g(r)$ is a multiple of $r^{p}$
for every $r\in(0,\max\limits_{x\in\dR^n}u(x)]$.
\end{theorem}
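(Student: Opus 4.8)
The plan is to run the method of moving spheres centered at an arbitrary point, using the Kelvin transform $u_\lambda$ defined in \eqref{Kelvin-transform} and the Narrow Region Principle (Theorem \ref{narrow-region-principle}) as the engine. Fix $x_0 \in \dR^n$; without loss of generality take $x_0 = 0$. Set $w_\lambda(x) = u_\lambda(x) - u(x)$ for $x \in B_\lambda(0) \setminus \{0\}$, which is anti-symmetric in the sense $w_\lambda(x^\lambda) = -(\lambda/|x|)^{n-\alpha}\, w_\lambda(x)$ after one checks the identity $(u_\lambda)_\lambda = u$. Using the conformal covariance of the fractional Laplacian under Kelvin transform, namely $(-\triangle)^{\alpha/2} u_\lambda(x) = (\lambda/|x|)^{n+\alpha}\big((-\triangle)^{\alpha/2}u\big)(x^\lambda)$, together with the equation \eqref{g-eq} and the monotonicity of $g(r)/r^p$, one derives the key differential inequality: wherever $w_\lambda(x) < 0$,
\begin{equation}
(-\triangle)^{\alpha/2} w_\lambda(x) = \Big(\tfrac{\lambda}{|x|}\Big)^{n+\alpha} g(u(x^\lambda)) - g(u(x)) \geq -c_\lambda(x)\, w_\lambda(x),\nonumber
\end{equation}
for a function $c_\lambda$ bounded from below on the set $\{w_\lambda < 0\}$ (here one uses $u_\lambda(x) = (\lambda/|x|)^{n+\alpha}\cdot (|x^\lambda|/\lambda)^{\,?}\,\dots$ — more precisely that $(\lambda/|x|)^{n+\alpha} u(x^\lambda)^p = u_\lambda(x)^p$ because of the exact exponent $p = \frac{n+\alpha}{n-\alpha}$, and that $u$ is locally bounded so the incremental quotient of $g$ is controlled). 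This is exactly the hypothesis needed to apply Theorem \ref{narrow-region-principle} to $w_\lambda$ on $\Omega_\lambda \equiv \{x \in B_\lambda(0) : w_\lambda(x) < 0\}$.

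Next I would carry out the two standard stages. \emph{Start:} show that for $\lambda > 0$ sufficiently small, $w_\lambda(x) \geq 0$ for all $x \in B_\lambda(0) \setminus \{0\}$. Near $x=0$ the Kelvin transform $u_\lambda$ blows up like $|x|^{-(n-\alpha)}$ while $u$ stays bounded, so $w_\lambda > 0$ in a punctured neighborhood of $0$; on the remaining region, which for small $\lambda$ is a spherically narrow shell $A_{\lambda-\delta,\lambda}(0)$, apply the Narrow Region Principle to conclude $w_\lambda \geq 0$ there as well. \emph{Moving:} let $\bar\lambda = \sup\{\lambda > 0 : w_\mu \geq 0 \text{ in } B_\mu(0)\setminus\{0\} \ \forall\, 0 < \mu \leq \lambda\}$. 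If $\bar\lambda < \infty$, one shows by a limiting/contradiction argument that $w_{\bar\lambda} \equiv 0$: otherwise $w_{\bar\lambda} > 0$ somewhere, and then using the strong maximum principle part of Proposition \ref{strong-max-principle} plus a narrow-region argument near $S_{\bar\lambda}(0)$ (away from which $w_{\bar\lambda}$ is bounded below by a positive constant on compact sets), one can push $\lambda$ slightly past $\bar\lambda$, contradicting maximality. So either the spheres can be moved all the way to $\lambda = \infty$, or they stop at some finite $\bar\lambda$ with $u_{\bar\lambda} \equiv u$.

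The dichotomy is then resolved by a now-classical calculus lemma (the "moving-spheres calibration"): if for \emph{every} center $x_0$ there is a finite stopping radius $\lambda_{x_0}$ with $u_{\lambda_{x_0}} \equiv u$, then $u$ must be exactly of the form $\beta_1\big(|x-x_0^*|^2 + \beta_2^2\big)^{-(n-\alpha)/2}$ for some fixed $x_0^*$, $\beta_1,\beta_2 > 0$ — this follows from comparing the functional equation $u_\lambda \equiv u$ at two radii, or from the Lemma of Li–Zhu/Li–Zhang type; whereas if for \emph{some} center the spheres move to infinity, an inequality $w_\lambda \geq 0$ for all $\lambda$ forces, upon sending $|x| \to \infty$ along rays, that $u$ is constant, giving case (1) with $g(C_0) = 0$ from the equation. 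In case (2), plugging the explicit bubble back into \eqref{g-eq} and using that $(-\triangle)^{\alpha/2}$ of this function is a constant multiple of $u^p$ forces $g(u(x)) = c\, u(x)^p$ on the range of $u$, i.e. on $(0, \max u]$; combined with $g(r)/r^p$ non-increasing this pins $g$ down to be that multiple of $r^p$ there.

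The main obstacle I anticipate is the handling of the singularity of $w_\lambda$ at the center $x=0$: the Kelvin transform is only defined on $B_\lambda(0)\setminus\{0\}$ and $u_\lambda \notin L_\alpha$ in general near $0$, so one must argue that the relevant integrals and the Narrow Region Principle still apply on $\Omega_\lambda$, which never reaches $0$ because $w_\lambda > 0$ there — this requires a careful a priori argument that the negativity set $\Omega_\lambda$ stays a positive distance from the origin (and, in the narrow-region stage, lies in the shell $A_{\lambda-\delta,\lambda}$). A secondary technical point is verifying the decay $u(x) = O(|x|^{-(n-\alpha)})$ at infinity needed to make the $\lambda \to \infty$ analysis and the calibration lemma rigorous; this should follow from the non-negativity of the solution together with the equation and the integrability $u \in L_\alpha$, but it needs to be established before the final classification.
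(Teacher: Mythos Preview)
Your approach is essentially the paper's: derive the differential inequality on $\Sigma_\lambda^-$ from the monotonicity of $g(r)/r^p$, start the spheres via Theorem~\ref{narrow-region-principle}, move to the critical scale, and then invoke the Li--Zhu calculus lemmas (packaged in the paper as Proposition~\ref{p:classification}). There is, however, one genuine logical gap in your dichotomy. You write that ``if for \emph{some} center the spheres move to infinity \ldots\ $u$ is constant,'' but the constancy part of Proposition~\ref{p:classification} (item (C)) requires $u_\lambda \geq u$ in $B_\lambda(x_0)$ for \emph{every} center $x_0$ and every $\lambda>0$; a single center does not suffice, and your limiting argument ``along rays'' cannot produce $\nabla u(z)=0$ from only one center. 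The paper closes this gap explicitly (Step~2 of Lemma~\ref{l:key-lemma}): if some center $z_0$ has a finite critical scale $\mu_0$, then the identity $u_{\mu_0}\equiv u$ forces the exact decay $|y|^{n-\alpha}u(y)\to \mu_0^{n-\alpha}u(z_0)>0$, and this decay is then used to show that no other center can have an infinite critical scale. That ``all finite or all infinite'' bridge is the missing step.

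Your two anticipated obstacles are less serious than you think. For the singularity at the origin: in Step~1 one takes $\lambda$ small, and then every point of $B_\lambda$ is within distance $\lambda$ of the boundary sphere, so the estimate $\int_{\dR^n\setminus B_\lambda}|x_0-y|^{-n-\alpha}\,dy\geq C/\lambda^\alpha$ in the proof of Theorem~\ref{narrow-region-principle} applies to any negative minimum; no separate analysis near $0$ is needed (and the paper carries none out). For the a~priori decay $u(x)=O(|x|^{-(n-\alpha)})$: this is not required in advance. In the finite-critical-scale case it is a \emph{consequence} of $u_{\mu_0}\equiv u$ (see \eqref{decay-order}), while in the infinite-critical-scale case the constant conclusion via (C) needs no decay at all.
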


The main ingredient in the proof of Theorem \ref{classification-theorem} is to establish the following  key lemma by applying the method of moving spheres. Precisely,

\begin{lemma}\label{l:key-lemma}Let $u$ be a positive solution to equation (\ref{g-eq}) and let $u_{\lambda}$ be the Kelvin transform of $u$ in the sense of (\ref{Kelvin-transform}).
 Then exactly one of the following holds.

$(A)$ For every $x_0\in\dR^n$, for all $\lambda\in(0,+\infty)$, it holds that
$u_{\lambda}(x)\geq u(x)$, $\forall x\in B_{\lambda}(x_0)$.

$(B)$ For every $x_0\in\dR^n$, there exists $\lambda_0\in(0,+\infty)$ which depends on $x_0$ such that $u_{\lambda_0}(x)\equiv u(x)$ for every $x\in B_{\lambda_0}(x_0)$.

\end{lemma}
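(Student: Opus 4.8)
The plan is to run the method of moving spheres from each fixed center $x_0 \in \dR^n$. Fix $x_0$ (which we may take to be $0$, as in Section \ref{s:maximum-principle}) and set $w_\lambda(x) \equiv u_\lambda(x) - u(x)$ for $x \in B_\lambda(0)$. The crucial algebraic observation, exploiting the critical exponent $p = \frac{n+\alpha}{n-\alpha}$ and the conformal covariance of the Kelvin transform, is that $w_\lambda$ satisfies a differential inequality of the form $(-\triangle)^{\alpha/2} w_\lambda(x) + c_\lambda(x) w_\lambda(x) \geq 0$ wherever $w_\lambda(x) < 0$, with $c_\lambda$ bounded below on such sets; this uses that $(-\triangle)^{\alpha/2} u_\lambda(x) = (\frac{\lambda}{|x|})^{n+\alpha} g(u(x^\lambda))$ together with the monotonicity of $g(r)/r^p$ to compare $g(u_\lambda)$ with $(u_\lambda/u)^p g(u)$. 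One also checks the anti-symmetry type relation needed to feed into Theorem \ref{narrow-region-principle}: the natural anti-symmetric quantity here is not $w_\lambda$ itself but a comparison set up so that the Narrow Region Principle applies on $\Omega = \{x \in B_\lambda(0) : w_\lambda(x) < 0\}$.

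The argument then proceeds in the standard two-step fashion. \emph{Step 1 (start moving):} show there is $\lambda_* > 0$ small such that $w_\lambda(x) \geq 0$ on $B_\lambda(0)$ for all $0 < \lambda \leq \lambda_*$; this follows because for small $\lambda$ the region where $w_\lambda < 0$ is a spherically narrow shell near $S_\lambda(0)$ (near $S_\lambda$ one has $u_\lambda = u$, and by continuity the negativity set is squeezed toward the boundary sphere), so Theorem \ref{narrow-region-principle} forces $w_\lambda \geq 0$. \emph{Step 2 (dichotomy at the critical radius):} define $\bar\lambda \equiv \sup\{\mu > 0 : w_\lambda \geq 0 \text{ on } B_\lambda(0) \text{ for all } \lambda \in (0,\mu]\}$. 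If $\bar\lambda = +\infty$ we are in case $(A)$ for this center. If $\bar\lambda < +\infty$, then by continuity $w_{\bar\lambda} \geq 0$ on $B_{\bar\lambda}(0)$; one then shows $w_{\bar\lambda} \equiv 0$. Indeed, if $w_{\bar\lambda} \not\equiv 0$, the strong maximum principle part of Proposition \ref{strong-max-principle} (or of Theorem \ref{narrow-region-principle}) gives $w_{\bar\lambda} > 0$ in the interior, and then a narrow-region argument lets one push $\lambda$ slightly past $\bar\lambda$: for $\lambda = \bar\lambda + \epsilon$ the set where $w_\lambda < 0$ is again confined to a thin shell (using the strict positivity at radius $\bar\lambda$ on compact subsets plus a uniform control of $w_\lambda$ near $S_\lambda$ coming from the decay $u \in L_\alpha$), contradicting the definition of $\bar\lambda$. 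Hence $w_{\bar\lambda} \equiv 0$, which is case $(B)$ with $\lambda_0 = \bar\lambda$. Finally, since at each center exactly one of these two alternatives occurs, and a short argument shows that if case $(A)$ holds for one center it holds for all (and likewise the $(B)$-alternative propagates), the global dichotomy of the statement follows.

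The main obstacle I expect is \emph{Step 2, pushing past $\bar\lambda$}: one must verify that the negativity set of $w_{\bar\lambda+\epsilon}$ is genuinely a spherically narrow region so that Theorem \ref{narrow-region-principle} applies. This requires two uniform estimates — a lower bound for $w_{\bar\lambda}$ on compact subsets of $B_{\bar\lambda}(0)$ (from the strong maximum principle), and control of $w_\lambda$ in a neighbourhood of $S_\lambda(0)$ uniform in $\lambda \in [\bar\lambda, \bar\lambda+1]$, which in turn rests on the decay of $u$ guaranteed by $u \in L_\alpha$ and on the explicit form of the Kelvin factor $(\frac{\lambda}{|x|})^{n-\alpha}$. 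A secondary technical point is the correct formulation of the anti-symmetry hypothesis so that Theorem \ref{narrow-region-principle} is literally applicable to $w_\lambda$ restricted to its negativity set; this is handled as in the moving planes setting by working with the function and its Kelvin reflection simultaneously. The remaining computations — the differential inequality for $w_\lambda$ and the identification of $g$ as a multiple of $r^p$ once case $(B)$ holds (which belongs to the proof of Theorem \ref{classification-theorem} rather than the lemma) — are routine given the critical-exponent bookkeeping.
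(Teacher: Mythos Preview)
Your proposal follows essentially the paper's route (start the sphere via the narrow-region principle, define the critical radius $\bar\lambda$, and if $\bar\lambda<\infty$ show equality by the strong maximum principle plus a narrow-region push). Two corrections are in order.

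First, minor: your Step 1 mechanism is misdescribed. It is not that continuity ``squeezes the negativity set toward $S_\lambda$''; rather, for small $\lambda$ the \emph{entire} ball $B_\lambda(x_0)$ is already a narrow region (every point lies within distance $\lambda$ of the exterior), and since the coefficient $c_\lambda$ is bounded on $B_\lambda(x_0)$ by local boundedness of $u$ and $g$, Theorem \ref{narrow-region-principle} applies directly with $\Omega=\Sigma_\lambda^-$ and $\delta=\lambda$. Similarly, the push past $\bar\lambda$ uses no decay of $u$ coming from $u\in L_\alpha$: one fixes a compact $K\subset B_{\bar\lambda}(x_0)$ on which $w_{\bar\lambda}\geq m_0>0$, keeps $w_\lambda>0$ on $K$ by continuity in $\lambda$, and applies the narrow region principle on the thin annulus $B_\lambda\setminus K$. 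Everything here is local.

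Second, and more importantly: the propagation you dismiss as ``a short argument'' is the real content of the \emph{global} dichotomy, and your sketch does not supply it. The paper's argument is as follows. If at some center $z_0$ the critical radius $\mu_0$ is finite, the identity $u_{\mu_0}\equiv u$ just established forces the exact asymptotic
\[
|y|^{n-\alpha}u(y)\ \longrightarrow\ \mu_0^{\,n-\alpha}u(z_0)\equiv D_0>0\qquad (|y|\to\infty).
\]
This decay then rules out alternative $(A)$ at \emph{every} other center $x_0$: if one had $u_{\lambda_j}(x_1)\geq u(x_1)$ along $\lambda_j\to\infty$ for some fixed $x_1$, the left side is $\sim D_0/\lambda_j^{\,n-\alpha}\to 0$, contradicting $u(x_1)>0$. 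Without this decay-then-contradiction step you only have a per-center alternative, not the uniform one across all $x_0$ that the lemma asserts.
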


\begin{proof}[Proof of Lemma \ref{l:key-lemma}]
Denote $w_{\lambda}(x)\equiv u_{\lambda}(x)-u(x)$.

The Lemma will be proven through the following steps.

{\textit{Step 1.}} We will show that for every sufficiently small $\lambda>0$, it holds that
$w_{\lambda}(x)\geq 0$ for every $x\in B_{\lambda}(x_0)$.

To this end, let $\lambda>0$ and define
\begin{equation}
\Sigma_{\lambda}^-\equiv\{x\in B_{\lambda}(x_0)|w_{\lambda}(x)<0\}.
\end{equation}
We will show that in fact $\Sigma_{\lambda}^-=\emptyset$ for every sufficiently small $\lambda>0$, which finishes the proof of {\textit{Step 1}}.

Now we are proving that there exists $\delta_0>0$ such that if $\Sigma_{\lambda}^-\neq\emptyset$, then
$\lambda\geq \delta_0$. In fact,
by the definition of $w_{\lambda}$, it holds that  for every $x\in B_{\lambda}(x_0)$, $w_{\lambda}(x)=-\Big(\frac{\lambda}{|x-x_0|}\Big)^{n-\alpha}w(x^{\lambda})$ and
\begin{equation}
(-\triangle)^{\alpha/2}w_{\lambda}(x)=\frac{g\Big((\frac{|x-x_0|}{\lambda})^{n-\alpha}u_{\lambda}(x)\Big)}{\Big((\frac{|x-x_0|}{\lambda})^{n-\alpha}u_{\lambda}(x)\Big)^p}u_{\lambda}^p(x)-\frac{g(u(x))}{u^p(x)}u^p(x).
\end{equation}
Then for every $x\in\Sigma_{\lambda}^-$,
$(\frac{|x-x_0|}{\lambda})^{n-\alpha}u_{\lambda}(x)<u(x).$
By assumption, $g(r)/r^p$ is non-increasing, so
\begin{equation}
\frac{g\Big((\frac{|x-x_0|}{\lambda})^{n-\alpha}u_{\lambda}(x)\Big)}{\Big((\frac{|x-x_0|}{\lambda})^{n-\alpha}u_{\lambda}(x)\Big)^p}\geq \frac{g(u(x))}{u^p(x)}
\end{equation}
and thus
\begin{eqnarray}
(-\triangle)^{\alpha/2}w_{\lambda}(x)&\geq&\frac{g(u(x))}{u^p(x)}(u_{\lambda}^p(x)-u^p(x))\nonumber\\
&=& \varphi(x)\cdot w_{\lambda}(x),
\end{eqnarray}
where
$\varphi(x)\equiv p\cdot\frac{g(u(x))}{u^p(x)}\cdot\psi(x)$, for some  $u_{\lambda}^{p-1}(x)\leq \psi(x)\leq u^{p-1}(x).
$
Notice that $\varphi\geq 0$ and hence
\begin{equation}
|\varphi(x)|\leq p\cdot\frac{g(u(x))}{u^p(x)}\cdot u^{p-1}(x)=p\cdot \frac{g(u(x))}{u(x)}.\label{phi-comp}
\end{equation}
Let $M_{\lambda}\equiv\sup\limits_{x\in B_{\lambda}(x_0)}u(x)<+\infty$, $N_{\lambda}\equiv\sup\limits_{x\in B_{\lambda}(x_0)}u^{-1}(x)<+\infty$ and $G_{\lambda}\equiv\sup\limits_{t\in(0,M_{\lambda})}g(t)$, then inequality (\ref{phi-comp}) implies that
\begin{equation}
\|\varphi\|_{L^{\infty}(B_{\lambda}(x_0))}\leq p\cdot G_{\lambda}\cdot N_{\lambda}<+\infty.
\end{equation}
Therefore, applying Theorem \ref{narrow-region-principle} and the local continuity of $w_{\lambda}$, $\Sigma_{\lambda}^{-}=\emptyset$
when $\lambda<\delta_0$, where $\delta_0>0$ is the constant in the statement of Theorem \ref{narrow-region-principle}.
We have completed the proof of {\textit{Step 1}}.

In fact, {\textit{Step 1}} provides a starting point to carry out the method of moving spheres for any given center $x_0\in\dR^n$. Then we will continuously increase the radius $\lambda$ of the sphere $S_{\lambda}(x_0) \equiv \partial B_{\lambda}(x_0)$ such that the inequality
$$w_{\lambda}(x)\geq 0 \; \text{holds for every} \, x\in B_{\lambda}(x_0).$$
For a given center $x_0\in\dR^n$, the \textit{critical scale}  $\lambda_0\in(0,+\infty]$ is defined as follows,
\begin{equation}
\lambda_0\equiv\sup\{\lambda>0 \mid w_{\mu}(x)\geq 0,\ \forall \ x\in B_{\mu}(x_0), \ \forall 0<\mu\leq\lambda\}.\label{critical-scale}
\end{equation}
Immediately, by definition, it holds that
\begin{equation}
w_{\lambda_0}(x)\geq 0,\ \forall x\in B_{\lambda_0}(x_0).\label{sign-at-lambda_0}
\end{equation}

The Lemma follows from  the analysis of the {\textit{global finiteness}} of the critical scale.
More precisely,

{\textit{Step 2.}}
Given $x_0\in\dR^n$, let the critical scale $\lambda_0>0$ be defined in (\ref{critical-scale}), then exactly one of the following holds:

$(a)$ For every $x_0\in\dR^n$, the corresponding critical scale $\lambda_0>0$ is finite, or

$(b)$  for every $x_0\in\dR^n$, the corresponding critical scale $\lambda_0=+\infty$, that is, for every $\lambda>0$
\begin{equation}
w_{\lambda}(x)\geq 0, \forall x\in B_{\lambda}(x_0).
\end{equation}

The statement of {\textit{Step 2}} can be reduced  to the following: if there exists $z_0\in\dR^n$ such that the corresponding critical scale $\mu_0 <\infty$, then for every $x_0\in\dR^n$ the corresponding critical scale $\lambda_0 < \infty$.

First, we need the following claim.

\vspace{0.5cm}
{\bf{Claim.}}
 If $\mu_0 < \infty$, then $u_{\mu_0}(x)=u(x)$ for every $x\in B_{\mu_0}(z_0)$.
\vspace{0.5cm}

We will prove the above claim by ruling out the following case: there exists $x'\in B_{\mu_0}(z_0)$
such that $w_{\mu_0}(x')\neq 0$.

First, by the definition of critical scale, \begin{equation}w_{\mu_0}(x)\geq 0,\ \forall x\in B_{\mu_0}(z_0).\end{equation}
Hence, we just assume that \begin{equation}w_{\mu_0}(x')>0.\label{1-point-positive}\end{equation}
Under this assumption, the first stage is to show that $w_{\mu_0}(x)>0$ for every $x\in B_{\mu_0}(x)$.
If the strict positivity of $w_{\mu_0}$ does not hold, we choose $x''\in B_{\mu_0}(z_0)$
such that $0=w_{\mu_0}(x'')=\min\limits_{x\in B_{\mu_0}(z_0)} w_{\mu_0}(x)$. That is, $u_{\mu_0}(x'')=u(x'')$.
By the calculations in the proof of {\textit{Step 1}},
\begin{eqnarray}
(-\triangle)^{\alpha/2}w_{\mu_0}(x'')&=&\frac{g\Big((\frac{|x''-z_0|}{\mu_0})^{n-\alpha}u_{\mu_0}(x'')\Big)}{\Big((\frac{|x''-z_0|}{\mu_0})^{n-\alpha}u_{\mu_0}(x'')\Big)^p}u_{\mu_0}^p(x'')-\frac{g(u(x''))}{u^p(x'')}u^p(x'')\nonumber\\
&=&\Bigg(\frac{g\Big((\frac{|x''-z_0|}{\mu_0})^{n-\alpha}u(x'')\Big)}{\Big((\frac{|x''-z_0|}{\mu_0})^{n-\alpha}u(x'')\Big)^p}-\frac{g(u(x''))}{u^p(x'')}\Bigg)u^p(x'')\nonumber\\
&\geq &0,
\end{eqnarray}
where the last inequality is due to the monotonicity of $g(t)/t^p$. Hence,
by Proposition \ref{strong-max-principle},
\begin{equation}
w_{\mu_0}(x)=0,\ \forall x\in B_{\mu_0}(z_0),
\end{equation}
which contradicts to the assumption. So the assumption (\ref{1-point-positive}) implies that
\begin{equation}
w_{\mu_0}(x)>0,\ \forall x\in B_{\mu_0}(z_0).\label{strict-positivity}
\end{equation}

The next stage is to rule out (\ref{strict-positivity}), which gives the proof of the claim. Let \begin{equation}0<m_0\equiv\min\limits_{x\in B_{\mu_0}(z_0)}w_{\mu_0}(x),\end{equation}
then by the continuity of $w_{\lambda}$ with respect to $\lambda$, there is some $0<\delta_1<\delta_0/10^{6}$ with $\delta_0$ in Theorem \ref{narrow-region-principle}
such that for every $\lambda\in[\mu_0,\mu_0+\delta_1)$, it holds that
\begin{equation}
w_{\lambda}(x)\geq m_0/2>0,\ \forall x\in B_{\mu_0}(z_0).\label{positivity-by-continuity}
\end{equation}
We take use of the same computations in {\textit{Step 1}} and by applying Theorem \ref{narrow-region-principle} and the local boundedness of $g$, then we obtain that
\begin{equation}
w_{\lambda}(x)>0,\ \forall x\in B_{\lambda}(z_0).\label{positivity-by-narrow}
\end{equation}
Notice that inequality (\ref{positivity-by-narrow}) contradicts to the definition of $\mu_0$.
Now we rule out (\ref{strict-positivity}), and thus we have proved the claim.

\vspace{0.5cm}

With  the above claim, we can finish
the proof of the reduced statement of {\textit{Step 2}}. We argue by contradiction and suppose the conclusion in the statement fails. That is, there exists $x_0\in\dR^n$ and a sequence $\{\lambda_j\}_{j\in\dZ_+}$ such that $\lim\limits_{j\to+\infty}\lambda_j=+\infty$ and
\begin{equation}
u_{\lambda_j}(x)\geq u(x),\ \forall x\in B_{\lambda_j}(x_0).\label{lambda_j-comp}
\end{equation}
Notice that, by the above claim,
\begin{equation}
\Big(\frac{\mu_0}{|x-z_0|}\Big)^{n-\alpha}\cdot u\Big(\frac{\mu_0^2(x-z_0)}{|x-z_0|^2}+z_0\Big)=u(x),\ \forall x\in B_{\mu_0}(z_0).
\end{equation}
Let $y\equiv\frac{\mu_0^2(x-z_0)}{|x-z_0|^2}+z_0$, then
\begin{eqnarray}
\lim\limits_{|y|\to+\infty}|y|^{n-\alpha}\cdot u(y)&=&\mu_0^{n-\alpha}\cdot\lim\limits_{|x|\to0}\Bigg(\frac{|y|}{\frac{\mu_0^2}{|x-z_0|}}\Bigg)^{n-\alpha}\cdot\Big(\frac{\mu_0}{|x-z_0|}\Big)^{n-\alpha}\cdot u(y)\nonumber\\
&=&\mu_0^{n-\alpha}u(z_0)\equiv D_0>0.\label{decay-order}
\end{eqnarray}
On the other hand, (\ref{lambda_j-comp}) implies that
\begin{equation}
\Big(\frac{\lambda_j}{|x_1-x_0|}\Big)^{n-\alpha}\cdot u\Big(\frac{\lambda_j^2(x_1-x_0)}{|x_1-x_0|^2}+x_0\Big)\geq u(x_1),\label{x_1-lambda_j}
\end{equation}
where $x_1\in\dR^n$
is chosen such that $x_1\in B_{\lambda_j}(x_0)$
for every sufficiently large $j$.
By (\ref{decay-order}) and taking the limit of (\ref{x_1-lambda_j}), it holds that
\begin{equation}
0=\lim\limits_{j\to+\infty}\frac{1}{\lambda_j^{n-\alpha}}\cdot D_0\geq u(x_1).
\end{equation}
The above inequality contradicts $u>0$ everywhere in $\dR^n$. This completes the proof of {\textit{Step 2}} and thus of Lemma \ref{l:key-lemma}.
\end{proof}

Now we proceed to prove Theorem \ref{classification-theorem}. The proof is a combination of Lemma \ref{l:key-lemma} and the following fact.

\begin{proposition}
[\cite{LZ}] \label{p:classification} Let $u\in C^{1,1}(\dR^n)$ and let $u_{\lambda}$ be its Kelvin transform,
in the notation of Lemma \ref{l:key-lemma}, then we have the following:

$(C) $ If $(A)$ holds, then $u$ is constant.

$(D) $ If $(B)$ holds, then there exists $\gamma>0$, $\beta>0$ such that
\begin{equation}
u(x)=\frac{\gamma}{(|x-x_0|^2+\beta^2)^{\frac{n-\alpha}{2}}},\ \forall x\in\dR^n.\label{bubble}
\end{equation}

\end{proposition}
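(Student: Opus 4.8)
The plan is to prove $(C)$ and $(D)$ separately, using in both cases only elementary facts about the Kelvin transform; in the application $u$ is positive, so for $(D)$ it is convenient to work with $v:=u^{-2/(n-\alpha)}$, a positive $C^{1,1}$ function. I will use throughout that the Kelvin transform about a point is an involution, $(u_\lambda)_\lambda=u$ off the center, so that any identity $u_\lambda\equiv u$ on the ball $B_\lambda(x_0)$ automatically propagates to all of $\dR^n\setminus\{x_0\}$.

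For $(C)$: first I would rewrite $(A)$ as a monotonicity statement along rays. Fixing a center $x_0$, a unit vector $\omega$, and writing $x=x_0+r\omega$ with $0<r<\lambda$, the inversion point is $x^\lambda=x_0+(\lambda^2/r)\omega$, so $(A)$ reads $(\lambda/r)^{n-\alpha}u\big(x_0+(\lambda^2/r)\omega\big)\ge u(x_0+r\omega)$; given any $0<r<s$ and choosing $\lambda=\sqrt{rs}\in(r,s)$, this becomes
\[
s^{(n-\alpha)/2}\,u(x_0+s\omega)\ \ge\ r^{(n-\alpha)/2}\,u(x_0+r\omega).
\]
Thus $t\mapsto t^{(n-\alpha)/2}u(x_0+t\omega)$ is non-decreasing on $(0,\infty)$, for \emph{every} $x_0$ and $\omega$. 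To deduce $u(p)=u(q)$ for arbitrary $p\neq q$, I would apply this along the line through $p$ and $q$ with the center $x_0=p-t(q-p)/|q-p|$ pushed to infinity ($t\to\infty$): then $p$ and $q$ lie on a common ray from $x_0$ with $|x_0-p|=t$ and $|x_0-q|=t+|p-q|$, so monotonicity gives $u(p)\le\big(1+|p-q|/t\big)^{(n-\alpha)/2}u(q)\to u(q)$, and putting $x_0$ on the $q$--side instead gives $u(q)\le u(p)$. Hence $u$ is constant.

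For $(D)$: by $(B)$, for each $x_0$ there is $\lambda(x_0)\in(0,\infty)$ with $u_{\lambda(x_0)}\equiv u$ on $\dR^n\setminus\{x_0\}$, which in terms of $v$ is the inversion identity
\[
v(y)=\frac{|y-x_0|^2}{\lambda(x_0)^2}\,v\!\Big(x_0+\frac{\lambda(x_0)^2(y-x_0)}{|y-x_0|^2}\Big),\qquad \forall\,y\neq x_0.
\]
First I would read off the behaviour at infinity: letting $|y|\to\infty$ with $x_0$ fixed, the inner argument tends to $x_0$ and $|y-x_0|^2/|y|^2\to1$, so $v(y)/|y|^2\to v(x_0)/\lambda(x_0)^2$; since the left side is independent of $x_0$, this limit is a single constant $c_0>0$, and $\lambda(x_0)^2=v(x_0)/c_0$ for every $x_0$. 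The decisive step is to show $v$ is a quadratic polynomial. For this I would substitute the first-order Taylor expansion of $v$ at $x_0$ (with an $O(|z-x_0|^2)$ remainder, legitimate because $v\in C^{1,1}$) into the identity above at the inversion point $z=y^{x_0}$, where $|z-x_0|=\lambda(x_0)^2/|y-x_0|$; multiplying through by $|y-x_0|^2/\lambda(x_0)^2$ and using $\lambda(x_0)^2=v(x_0)/c_0$, the remainder collapses to $O(1)$ and one is left with
\[
v(y)=c_0|y-x_0|^2+\nabla v(x_0)\cdot(y-x_0)+O(1)\qquad\text{as }|y|\to\infty.
\]
Comparing this expansion for a general center $x_0$ with the one for the center $0$ and matching the (otherwise unbounded) term linear in $y$ forces $\nabla v(x_0)=\nabla v(0)+2c_0x_0$ for all $x_0$, hence $v(x)=c_0|x|^2+\nabla v(0)\cdot x+\text{const}=c_0|x-\bar x|^2+e$ for some $\bar x\in\dR^n$ and $e\in\dR$. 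Since $v>0$ everywhere and $u=v^{-(n-\alpha)/2}$ is finite, $e=c_0\beta^2$ for some $\beta>0$, so $u(x)=\gamma\,(|x-\bar x|^2+\beta^2)^{-(n-\alpha)/2}$ with $\gamma=c_0^{-(n-\alpha)/2}>0$, which is the asserted form (\ref{bubble}).

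The main obstacle is the polynomial step in $(D)$: one has to check carefully that, after inserting the local Taylor expansion of $v$ into the inversion identity, the error really does collapse to a bounded $O(1)$ as $|y|\to\infty$ — this is precisely where $C^{1,1}$ enters — so that the linear-in-$y$ term stands out as the only unbounded piece and the comparison across centers pins down $\nabla^2 v\equiv 2c_0 I$. Everything else is routine; it is worth noting in passing that $(A)$ and $(B)$ are mutually exclusive, since no nonzero constant is invariant under a Kelvin transform, which is consistent with the dichotomy asserted in Lemma \ref{l:key-lemma}.
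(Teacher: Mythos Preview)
Your argument is correct for both parts.

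For $(C)$ you take a genuinely different route from the paper. The paper fixes $z$, differentiates $w_\lambda$ along the ray through $z$ from the center $y$, observes that the derivative at the sphere has a sign, and then sends $|y|\to\infty$ after dividing by $|z-y|$ to conclude $\langle\nabla u(z),\nu\rangle\ge0$ for every direction $\nu$, hence $\nabla u(z)=0$. You instead recast $(A)$ as monotonicity of $t\mapsto t^{(n-\alpha)/2}u(x_0+t\omega)$ and compare values at two points directly by pushing the center to infinity along the line joining them. Your version is more elementary in that it never differentiates $u$; the paper's version, however, is designed to be reused in Section~5, where the centers are confined to $\partial\dR_+^n$ and one can only conclude that $\nabla u$ is normal to the boundary hyperplane. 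Your ray argument does not adapt to that restricted situation (for $p,q$ with equal $x_n>0$, the line through them contains no admissible center), so the paper's differentiation is what is actually needed later.

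For $(D)$ the paper gives no proof, citing Li--Zhu \cite{LZ}; your argument via $v=u^{-2/(n-\alpha)}$, reading off $v(y)/|y|^2\to c_0>0$ and $\lambda(x_0)^2=v(x_0)/c_0$, inserting the $C^{1,1}$ Taylor remainder into the inversion identity, and matching the linear-in-$y$ term across centers to get $\nabla v(x_0)=\nabla v(0)+2c_0x_0$, is precisely the Li--Zhu scheme and is carried out correctly. The $C^{1,1}$ hypothesis is exactly what makes the $O(|z-x_0|^2)$ remainder collapse to $O(1)$ after multiplying by $|y-x_0|^2/\lambda^2$, so your identification of the ``main obstacle'' is accurate but not an actual gap.
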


We will use a modified version of $(C)$ in the proof of our later theorem. So for the convenience of readers, we give the proof of $(C)$.

\begin{proof}
[Proof of $(C)$ in proposition \ref{p:classification}]

It suffices to show that for every $z\in\dR^n$, $\nabla u(z)=0$. This actually is a result of \cite{LZ}, however, for reader's convenience, we outline it here. For $y\in\dR^n$, let
\begin{equation}
\lambda\equiv|z-y|,\ x\equiv\frac{|x-y|(z-y)}{\lambda}+y,
\end{equation}
and
\begin{equation}
t\equiv\frac{|x-y|}{\lambda}.
\end{equation}
Then consider the Kelvin transform about the center $y\in\dR^n$,
\begin{eqnarray}
w_{\lambda}(x)&=&u_{\lambda}(x)-u(x)\nonumber\\
&=&(\frac{1}{t})^{n-\alpha}u(\frac{z-y}{t}+y)-u(t(z-y)+y)\equiv h(t).
\end{eqnarray}
It is clear that $h(1)=0$
and $h(t)<0$
for every $t>1$, and thus
\begin{equation}
0\geq\frac{d}{dt}\Big|_{t=1}h(t)=-(n-\alpha)u(z)-2\langle\nabla u(z),z-y\rangle.
\end{equation}
Hence, for every $y\in\dR^n$  with $|z-y|\neq 0$, we have that
\begin{equation}
\frac{-(n-\alpha)u(z)-2\langle\nabla u(z),z-y\rangle}{|z-y|}\leq 0.
\end{equation}
Let $\nu\equiv\frac{z-y}{|z-y|}$, we have that $\langle\nabla u(z),\nu\rangle\geq 0$
by letting $|y|\rightarrow+\infty$.
Since $y\in\dR^n$ is arbitrary, $\nabla u(z)=0$. This completes the proof.

\end{proof}

With the above results, we can finish the proof Theorem \ref{classification-theorem}.

\begin{proof}
[Proof of Theorem \ref{classification-theorem}]

By Lemma \ref{l:key-lemma} and proposition \ref{p:classification},
if $u\in L_{\alpha}(\dR^n)\cap C^{1,1}(\dR^n)$ is a non-negative solution to (\ref{g-eq}), then either $(C)$ or $(D)$ holds. If $(C)$ holds,
there exists $C_0\geq 0$ such that $u\equiv C_0$ and thus $g(C_0)=(-\triangle)^{\alpha/2}C_0=0$. Now assume that  $(D)$ holds. Notice that (\ref{bubble}) is a solution to
\begin{equation}
(-\triangle)^{\alpha/2}u= g(u).
\end{equation}
The function $g$ satisfies that for some $\gamma_0> 0$,
\begin{equation}
g(r)=\gamma_0\cdot r^p,\ \forall r\in(0,\max\limits_{x\in\dR^n}u(x)].
\end{equation}
The completes the proof of the theorem.

\end{proof}

\section{A Non-Existence Result for Prescribing $Q_{\alpha}$ Curvature Equation on $\mathbb{S}^n$}
\label{s:prescribing}

In this section, we will prove a non-existence result for the prescribing fractional order $Q_{\alpha}$ curvature
equation on $\mathbb{S}^n$.

 A natural question in conformal geometry is the following:

 Let $(\mathbb{S}^n,g_1)$ be the round sphere of dimension $n\geq 3$ such that $\sec_{g_1}\equiv1$.
 Given $0<\alpha<2$ and a smooth function $Q\in C^{\infty}(\mathbb{S}^n)$, does there exist a conformal metric $\tilde{g}=u^{\frac{4}{n-\alpha}}g_1$ such that
$Q_{\alpha}(x)\equiv Q_{\alpha}[g](x)=Q(x)$ for every $ x\in\mathbb{S}^n$?

In this direction, one of the most important
points is to find a necessary and sufficient condition
on the function $Q$ such that the conformal metric $g$ exists under that condition.
 When $\alpha=2$, it turns out to be that $Q_{\alpha}$ is the scalar curvature up to a dimensional constant, and the corresponding prescribing scalar curvature problem is called the Nirenberg problem. The studies of such a problem leads to a very active research area in the last decade. The pioneer
work in the direction of prescribing $Q_{\alpha}$ curvature problem can be found in \cite{JLX1}
 and \cite{JLX2}.
Our main goal in this section is to extend a non-existence result in \cite{ChLi} to the fractional setting.

We will focus on the fractional GJMS operator $P_{\alpha}$ on a round sphere $(\mathbb{S}^n,g_1)$ with $\sec_{g_1}\equiv1$ in the sense of \cite{CG}.
For the convenience of readers, let us briefly introduce the preliminary materials.

\subsection{Fractional GJMS Operators and Prescribing Fractional Curvature Problem}
We will start with the definition of fractional GJMS operator.
Let $(\mathbb{H}^{n+1},g_{H})$ be the $(n+1)$-dimensional hyperbolic space with $\sec_{g_{H}}\equiv-1$. It is standard that the $n$-sphere $\mathbb{S}^n$ with the conformal structure $[g_1]$ can be viewed as the conformal infinity of $(\mathbb{H}^{n+1},g_{H})$. Specifically, the conformal compactification is given by the following coordinates,
\begin{equation}
g_H=\frac{dt^2+(1-\frac{t^2}{4})^2g_1}{t^2},\ t\in[0,2].
\end{equation}
Clearly, $(t^2g_H)|_{t=0}=g_1$.
By \cite{CG}, for any representative $h\in[g_1]$, the fractional GJMS operator $P_{\alpha}[h]\equiv P_{\alpha}[h,g_H]$ can be defined in terms of the scattering operator. Precisely, consider the
Poisson equation with $s\in(0,n)$,
\begin{equation}
-\triangle_{g_H} v-s(n-s)v=0,
\end{equation}
and the solution of the form
\begin{equation}
v=y^{n-s}F+y^sH,
\end{equation}
with $F,G\in C^{\infty}(\mathbb{H}^{n+1})$ and $y$ is a geodesic defining function of $(\mathbb{S}^n,h)$ such that
\begin{enumerate}
\item there is some $\epsilon>0$ such that $|\nabla y|_{y^2g_+}\equiv 1$ holds on $\mathbb{S}^n\times[0,\epsilon)$,

\item $y^2g_{H}|_{y=0}=h$.
\end{enumerate}
By standard calculations (see \cite{CG} or \cite{GZ}), we have the following expansion of $F$ and $G$ in terms of $y$,
\begin{equation}
F=f_0+f_2\cdot y^2+\ldots,
\end{equation}
and
 \begin{equation}
H=h_0+h_2\cdot y^2+\ldots.
\end{equation}
Then the scattering operator $S(s)$
can be defined as follows,
if $F|_{y=0}=f$, then $S(s)(f)\equiv h$
with $h=H|_{y=0}$.
Now for any $\alpha\in(0,n)$, the fractional GJMS operator $P_{\alpha}[h]$ is defined by
\begin{equation}
P_{\alpha}[h](f)\equiv2^{\alpha}\cdot\frac{\Gamma(\frac{\alpha}{2})}{\Gamma(-\frac{\alpha}{2})}\cdot S\Big(\frac{n+\alpha}{2}\Big)(f).
\end{equation}
In the above definition, the order $\alpha$ can be any real number in $(0,n)$. Moreover, the above procedure can be generalized to any conformally compact Einstein manifold $(X^{n+1},g_+)$ with a conformal infinity $(M^n,[h])$ (for more details, see \cite{CG}).
In general, the fractional GJMS operator
has the following conformal covariance property: if $\hat{h}=u^{\frac{4}{n-\alpha}}h$, then for every $v\in C^{\infty}(M^n)$,
\begin{equation}
P_{\alpha}[\hat{h},g_+](v)=u^{-\frac{n+\alpha}{n-\alpha}}P_{\alpha}[h,g_+](uv).\label{conformal-covariance}
\end{equation}
For $0<\alpha<n$, we define the $Q_{\alpha}$ curvature
\begin{equation}Q_{\alpha}\equiv\frac{n-\alpha}{2}\cdot P_{\alpha}[h,g_+](1)\end{equation}
 with respect to the metric $h$ on $M^n$.

In the rest of this section, we will briefly recall the prescribing $Q_{\alpha}$ curvature problem with $0<\alpha<2$. By \eqref{conformal-covariance}, the existence of the conformal metric $h=\tilde{u}^{\frac{4}{n-\alpha}}g_1$ with $Q_{\alpha}[h]=Q$ is equivalent to the existence of positive solution to
\begin{equation}
P_{\alpha}[g_1](\tilde{u})(x)=\frac{2}{n-\alpha}\cdot Q(x)\cdot \tilde{u}^{\frac{n+\alpha}{n-\alpha}}(x).\label{Q-eq-on-sphere}
\end{equation}
Next, we will see that the prescribing $Q_{\alpha}$ curvature problem on $\mathbb{S}^n$ can be reduced to the equation on Euclidean space via stereographic projection.
Let $\pi_S: \mathbb{S}^n\setminus\{S\}\to\dR^n$ be the stereographic projection, then the pullback metric $(\pi^{-1})^* g_1$ is conformal to $g_0$, the Euclidean metric on $\dR^n$. Indeed, for every $x\in\dR^n$,
\begin{equation}
(\pi_S^{-1})^* g_1\equiv\Big(\frac{2}{1+|x|^2}\Big)^2\cdot g_0.
\end{equation}
Therefore, $g_1$ can be viewed as a conformal metric of $\dR^n$. Let
$\eta(x)\equiv\Big(\frac{2}{1+|x|^2}\Big)^{\frac{n-\alpha}{2}}$, then
\begin{equation}
g_1=\eta^{\frac{4}{n-\alpha}}g_0.
\end{equation}
By the conformal covariance property \eqref{conformal-covariance}, it holds that
\begin{equation}
P_{\alpha}[g_1](\tilde{u})(x)=\eta^{-\frac{n+\alpha}{n-\alpha}}(-\triangle)^{\alpha/2}(\tilde{u}\cdot\eta)(x),\ \forall x\in\dR^n.
\end{equation}
Now let $\tilde{u}$ be a positive solution to (\ref{Q-eq-on-sphere}) and denote by $u(x)\equiv\tilde{u}(x)\cdot\eta(x)$ for every $x\in\dR^n$,
 then the above identity and equation (\ref{Q-eq-on-sphere}) imply that
\begin{equation}
(-\triangle)^{\alpha/2}u (x)=\frac{2}{n-\alpha}\cdot Q(x)\cdot u^{\frac{n+\alpha}{n-\alpha}}(x),\ \forall x\in\dR^n.\end{equation}
For convenience, we consider the following equation,
\begin{equation}
(-\triangle)^{\alpha/2}u (x)= Q(x)\cdot u^{p}(x),\ \forall x\in\dR^n,\label{prescribing-on-R^n}
\end{equation}
with $p\equiv\frac{n+\alpha}{n-\alpha}$.
By assumption, $\tilde{u}$
is smooth and thus bounded on $\mathbb{S}^n$, which implies that
\begin{equation}
|x|^{n-\alpha}u(x)=\tilde{u}(x)\cdot\Big(\frac{2|x|^2}{1+|x|^2}\Big)^{\frac{n-\alpha}{2}}\longrightarrow 2^{\frac{n-\alpha}{2}}\cdot\tilde{u}(S)\equiv M_0,\ \text{as}\ |x|\to\infty.\label{decay-condition}
\end{equation}
From now on, we just focus on the equation \eqref{prescribing-on-R^n} in the Euclidean space with decay condition (\ref{decay-condition}).

In the study of prescribing $Q_{\alpha}$ curvature problem, it is very natural to start with some smooth function $Q$
with certain symmetry. In this section, we assume that the smooth function $Q$
is rotationally symmetric. In \cite{ChLi}, the authors proved the following non-existence theorem for prescribing scalar curvature problem.
\begin{theorem}
[\cite{ChLi}]\label{t:Chen-Li-theorem} Let $R$ be smooth and rotationally symmetric. If $R$ is monotone in the region where $R>0$ and $R\not\equiv C$, then the prescribing scalar curvature problem on sphere does not admit any positive solution.
\end{theorem}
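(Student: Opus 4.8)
The plan is to transplant the equation to $\dR^n$ by stereographic projection and then run the method of moving spheres about the axis of symmetry; this is the $\alpha=2$ case of the scheme of Section~\ref{s:maximum-principle} combined with the reduction \eqref{prescribing-on-R^n}--\eqref{decay-condition}, and is the prototype for Theorem~\ref{prescribing-Q0}. A positive solution of the prescribing scalar curvature problem for a rotationally symmetric $R$ on $\mathbb{S}^n$ yields a positive $u\in C^{1,1}(\dR^n)$ with
\[
-\triangle u(x)=R(|x|)\,u^{p}(x),\qquad x\in\dR^n,\quad p=\tfrac{n+2}{n-2},
\]
and $|x|^{n-2}u(x)\to M_0>0$ as $|x|\to\infty$, where $R$ now depends only on $|x|$. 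If $R\le 0$ on $\mathbb{S}^n$ then $-\triangle u\le 0$, so $u$ is subharmonic on $\dR^n$; since $u(x)\to 0$ at infinity, the maximum principle forces $u\le 0$, contradicting $u>0$. Hence $R>0$ somewhere, and since $R$ is rotationally symmetric, continuous, and monotone where positive, $\{R>0\}$ must be a spherical cap containing a pole, or all of $\mathbb{S}^n$ — a positive region not reaching either pole would force $R$ to be monotone between two of its zeros, hence $\equiv 0$ there. After a conformal change of coordinates on $\mathbb{S}^n$ (which carries the problem to one of the same form, with an admissible solution) I may assume this cap is $\{|x|<b\}$, $b\in(0,\infty]$, and that $R$ is non-increasing in $|x|$ on $\{|x|<b\}$ (so $R\le 0$ for $|x|\ge b$).

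For $\lambda>0$ let $u_\lambda$ be the Kelvin transform of $u$ about $S_\lambda(0^n)$ as in \eqref{Kelvin-transform} (with $x_0=0$), put $x^\lambda=\lambda^2x/|x|^2$ and $w_\lambda=u_\lambda-u$. Then $w_\lambda$ is anti-symmetric, $w_\lambda(x)=-(\lambda/|x|)^{n-2}w_\lambda(x^\lambda)$, and on $B_\lambda(0^n)$
\[
-\triangle w_\lambda=R(|x^\lambda|)\,u_\lambda^{p}-R(|x|)\,u^{p}=c_\lambda(x)\,w_\lambda+\bigl[R(|x^\lambda|)-R(|x|)\bigr]u_\lambda^{p},
\]
with $c_\lambda$ bounded on $B_\lambda(0^n)$ (both $u$ and $u_\lambda$ are bounded there, and $R$ is bounded). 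Since $|x|<\lambda<|x^\lambda|$ on $B_\lambda(0^n)$, whenever $|x|<b$ the monotonicity of $R$ on the cap together with $R\le 0$ off it gives $R(|x^\lambda|)\le R(|x|)$; hence on $\{w_\lambda>0\}$ the function $v:=-w_\lambda$ satisfies $-\triangle v-c_\lambda v\ge 0$ and is anti-symmetric, which is the hypothesis of the classical ($\alpha=2$) analogues of Proposition~\ref{strong-max-principle} and Theorem~\ref{narrow-region-principle}. With these I would carry out the two standard steps: $(i)$ $w_\lambda\le 0$ in $B_\lambda(0^n)$ for all sufficiently small $\lambda>0$, which starts the process; and $(ii)$ setting $\lambda_0:=\sup\{\lambda>0:w_\mu\le 0\text{ in }B_\mu(0^n)\text{ for all }0<\mu\le\lambda\}\in(0,\infty]$, either $\lambda_0=\infty$, or $w_{\lambda_0}\equiv 0$ on $B_{\lambda_0}(0^n)$.

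Both alternatives are contradictory. If $\lambda_0=\infty$, then $u_\lambda\le u$ on $B_\lambda(0^n)$ for every $\lambda$; with $r=|x|$ and $s=\lambda^2/|x|$ (so $rs=\lambda^2$ and $\lambda\in(r,s)$ is arbitrary), this says exactly that $g_\theta(\rho):=\rho^{(n-2)/2}u(\rho\theta)$ is non-increasing in $\rho>0$ for each $\theta\in\mathbb{S}^{n-1}$; since $g_\theta\ge 0$ and $g_\theta(\rho)\to 0$ as $\rho\to 0^+$, we conclude $g_\theta\equiv 0$, i.e. $u\equiv 0$ — impossible. If instead $\lambda_0<\infty$ and $w_{\lambda_0}\equiv 0$, then $u$ is invariant under inversion about $S_{\lambda_0}(0^n)$; comparing $-\triangle u=R(|x|)u^p$ with the equation satisfied by $u_{\lambda_0}$ forces $R(r)=R(\lambda_0^2/r)$ for all $r>0$. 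When $b<\infty$, choosing $r$ with $r<b$ and $\lambda_0^2/r\ge b$ (possible since $\min\{b,\lambda_0^2/b\}>0$) gives $0<R(r)=R(\lambda_0^2/r)\le 0$, absurd; when $b=\infty$, the identity together with $R$ non-increasing forces $R$ constant. In either case this contradicts $R\not\equiv C$, so no positive solution exists.

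The heart of the matter — and the main obstacle — is to make the moving-spheres process $(i)$--$(ii)$ run, not the (routine) algebra of the dichotomy. The sign of $[R(|x^\lambda|)-R(|x|)]u_\lambda^{p}$ is favourable as soon as $|x|$ sits in the positive cap, but once $\lambda>b$ a thin annulus $\{b\le|x|<\lambda\}$ appears on which $R\le 0$ at both $x$ and $x^\lambda$ and monotonicity gives no information; there one must close the argument by other means — the superharmonicity of $u$ and of $u_\lambda$ on $\{R\le 0\}$, what has already been established at smaller radii, and the decay $|x|^{n-2}u(x)\to M_0$ — and this is exactly where the hypothesis ``$R$ monotone \emph{in the region where $R>0$}'', which pins $\{R>0\}$ to a cap, is used. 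A further, standard point is the narrow-region bookkeeping needed to initiate the process — near $0^n$, $u_\lambda$ is large for small $\lambda$, so $w_\lambda>0$ there and one works in the complementary narrow region near $S_\lambda(0^n)$ — and to continue past $\lambda_0$ when $w_{\lambda_0}\not\equiv0$.
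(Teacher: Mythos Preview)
The paper does not prove this cited theorem separately; its proof of Theorem~\ref{prescribing-Q} (the fractional version) specializes to $\alpha=2$ and is the relevant benchmark. That proof is quite different from your plan and, crucially, avoids both difficulties you flag. After normalizing the positive cap to $\{r<1\}$ (condition~\eqref{Q-condition}), the paper starts the spheres at $\lambda=1$ and moves them \emph{inward}. At $\lambda=1$ one has, for every $x\in B_1(0^n)$, $Q(|x|)>0$ while $Q(1/|x|)\le 0$, so the right-hand side of \eqref{equation-of-w} is strictly negative and Proposition~\ref{strong-max-principle} yields $w_1<0$ immediately. One then sets $\lambda_0=\inf\{\lambda>0: w_\mu\le 0\text{ on }B_\mu\text{ for all }\mu\in[\lambda,1]\}$ and uses the narrow region principle to drive $\lambda_0$ down to $0$; the contradiction is simply $u(0)\ge u_{\lambda_j}(0)=M_0/\lambda_j^{\,n-2}\to\infty$. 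Because $\lambda\le 1=b$ throughout, the ``bad'' annulus $\{b\le|x|<\lambda\}$ never appears, and no dichotomy $\lambda_0=\infty$ versus $w_{\lambda_0}\equiv 0$ is needed.

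Your outward scheme, by contrast, does not get off the ground as written. You claim $(i)$ that $w_\lambda\le 0$ in $B_\lambda(0^n)$ for all small $\lambda$, but this is false: by the decay \eqref{decay-condition}, $u_\lambda(0)=\lim_{x\to 0}(\lambda/|x|)^{n-2}u(x^\lambda)=M_0/\lambda^{n-2}$, so $w_\lambda(0)=M_0/\lambda^{n-2}-u(0)>0$ for $\lambda$ small. You notice this yourself in the last paragraph (``near $0^n$, $u_\lambda$ is large''), but the fix you propose --- work in ``the complementary narrow region near $S_\lambda$'' --- cannot rescue the conclusion $w_\lambda\le 0$ on all of $B_\lambda$, since the set $\{w_\lambda>0\}$ contains a fixed neighbourhood of the origin and is therefore not narrow; the narrow region principle simply does not apply to it. If you reverse the inequality and try to show $w_\lambda\ge 0$ for small $\lambda$ (which is the natural direction, as in Lemma~\ref{l:key-lemma}), your source term $[R(|x^\lambda|)-R(|x|)]u_\lambda^p\le 0$ has the wrong sign for that argument. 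Either way, the starting step fails, and the obstacle you already identified at $\lambda>b$ would still be waiting. The paper's inward-from-$\lambda=b$ trick is precisely what makes the whole argument clean.
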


We will extend theorem \ref{t:Chen-Li-theorem} to the fractional setting.
First we reduce the corresponding assumption on the radial function $Q(x)\equiv Q(|x|)$ to the following
\begin{equation}
\begin{cases}
Q\in C^{\infty}(\dR^n),\\
Q(r)>0,\ Q'(r)\leq 0,\ \forall r<1,\\
 Q(r)\leq 0,\ \forall r\geq 1,
\end{cases}\label{Q-condition}
\end{equation}
where $r\equiv|x|$.
With the above preparations, we are ready to state and prove the non-existence result of the prescribing $Q_{\alpha}$ curvature problem in the following.

\subsection{A Non-Existence Theorem}
\label{ss:non-existence}

We will apply the method of moving spheres to prove the non-existence result of the prescribing $Q_{\alpha}$ curvature equation on the sphere.

\begin{theorem}\label{prescribing-Q}
Let $Q$ be continuous and rotationally symmetric on the round sphere $(\mathbb{S}^n,g_1)$ of curvature $\equiv 1$. In addition, assume that $Q$ is monotone in the region where $Q>0$ and $Q\not\equiv C$. Then for every $0<\alpha\leq 2$, the prescribing $Q$-curvature equation on $(\mathbb{S}^n,g_1)$
\begin{equation}
P_{\alpha}(\tilde{u})=Q\cdot \tilde{u}^{p},\ p\equiv\frac{n+\alpha}{n-\alpha},\label{prescribing-on-sphere}
\end{equation}
 does not admit any positive solution $\tilde{u}\in \mathcal{D}(P_{\alpha})\cap C^{1,1}(\mathbb{S}^n)$.
\end{theorem}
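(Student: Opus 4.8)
The plan is to run the method of moving spheres on the reduced Euclidean problem. By the discussion of Section~\ref{ss:non-existence}, a positive solution $\tilde u\in\mathcal D(P_{\alpha})\cap C^{1,1}(\mathbb{S}^n)$ of \eqref{prescribing-on-sphere} produces, through the stereographic projection, a positive $u\in L_{\alpha}\cap C^{1,1}_{loc}(\dR^n)$ solving \eqref{prescribing-on-R^n}, i.e.\ $(-\triangle)^{\alpha/2}u=Q(|x|)u^{p}$ with $p=\frac{n+\alpha}{n-\alpha}$, where — after rotating and rescaling the variable and replacing $Q$ by a suitable constant multiple — $Q$ satisfies \eqref{Q-condition} (with the smoothness there relaxed to mere continuity) and $u$ satisfies the decay \eqref{decay-condition}, $|x|^{n-\alpha}u(x)\to M_{0}>0$. (For $\alpha=2$ this is essentially Chen--Li's Theorem~\ref{t:Chen-Li-theorem}; the new content is $0<\alpha<2$, where $(-\triangle)^{\alpha/2}$ is nonlocal.) So it suffices to rule out such a $u$. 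Since $Q$ is radial I would move spheres centered at the origin: with $u_{\lambda}$ the Kelvin transform \eqref{Kelvin-transform} of $u$ (with $x_{0}=0$) and $w_{\lambda}=u_{\lambda}-u$, one has the anti-symmetry $w_{\lambda}(x)=-\big(\tfrac{\lambda}{|x|}\big)^{n-\alpha}w_{\lambda}(x^{\lambda})$ (so $w_{\lambda}$ vanishes on $\partial B_{\lambda}(0)$) and, by the conformal covariance of $(-\triangle)^{\alpha/2}$ and the radial symmetry of $Q$,
\[
(-\triangle)^{\alpha/2}w_{\lambda}(x)=Q(|x^{\lambda}|)\,u_{\lambda}^{p}(x)-Q(|x|)\,u^{p}(x),\qquad x\in B_{\lambda}(0).
\]

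The moving-spheres machinery then mirrors the proof of Lemma~\ref{l:key-lemma}. \emph{Step~1 (start):} for all sufficiently small $\lambda>0$, $w_{\lambda}\ge 0$ in $B_{\lambda}(0)$. The decay \eqref{decay-condition} makes $u_{\lambda}$ large near the origin, so $\Sigma_{\lambda}^{-}:=\{x\in B_{\lambda}(0):w_{\lambda}(x)<0\}$, if non-empty, is confined to a narrow annulus $A_{\lambda-\delta,\lambda}(0)$; there one writes $(-\triangle)^{\alpha/2}w_{\lambda}=c_{\lambda}(x)w_{\lambda}+\big(Q(|x^{\lambda}|)-Q(|x|)\big)u_{\lambda}^{p}$ with $c_{\lambda}$ bounded, uses that $\lambda\le 1$ gives $Q(|x^{\lambda}|)\le Q(|x|)$ on $B_{\lambda}(0)$ by \eqref{Q-condition} while the continuity of $Q$ makes $|Q(|x^{\lambda}|)-Q(|x|)|\lesssim\dist(x,\partial B_{\lambda}(0))$, which in turn is $\lesssim|w_{\lambda}(x)|$ once one notes that $w_{\lambda}$ is non-degenerate at $\partial B_{\lambda}(0)$; the extra term is thus absorbed into the coefficient and Theorem~\ref{narrow-region-principle} forces $\Sigma_{\lambda}^{-}=\varnothing$. \emph{Step~2 (critical radius):} put
\[
\lambda_{0}:=\sup\{\lambda>0:\ w_{\mu}\ge 0\ \text{in}\ B_{\mu}(0)\ \text{for all}\ 0<\mu\le\lambda\}\in(0,+\infty],
\]
so $w_{\lambda_{0}}\ge0$ in $B_{\lambda_{0}}(0)$ if $\lambda_{0}<\infty$; arguing as in the Claim in Step~2 of the proof of Lemma~\ref{l:key-lemma} — using Proposition~\ref{strong-max-principle} (together with \eqref{Q-condition}) to upgrade $w_{\lambda_{0}}\not\equiv0$ to $w_{\lambda_{0}}>0$, then Theorem~\ref{narrow-region-principle} once more to push spheres past $\lambda_{0}$ — yields the dichotomy: either $\lambda_{0}=+\infty$, or $\lambda_{0}<\infty$ and $w_{\lambda_{0}}\equiv0$ in $B_{\lambda_{0}}(0)$.

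It remains to discard both alternatives. If $\lambda_{0}=+\infty$, fix $x\neq0$; then $u_{\lambda}(x)\ge u(x)>0$ for every $\lambda>|x|$, whereas $|x^{\lambda}|=\lambda^{2}/|x|\to\infty$ and \eqref{decay-condition} give $u_{\lambda}(x)=\big(\tfrac{\lambda}{|x|}\big)^{n-\alpha}u(x^{\lambda})=M_{0}\lambda^{-(n-\alpha)}(1+o(1))\to0$ as $\lambda\to\infty$, a contradiction. Hence $\lambda_{0}<\infty$ and $w_{\lambda_{0}}\equiv0$ in $B_{\lambda_{0}}(0)$, i.e.\ $u\equiv u_{\lambda_{0}}$; inverting, $u(x)=\big(\tfrac{\lambda_{0}}{|x|}\big)^{n-\alpha}u\big(\tfrac{\lambda_{0}^{2}x}{|x|^{2}}\big)$ for all $x\neq0$, and substituting into \eqref{prescribing-on-R^n},
\[
Q\!\left(\tfrac{\lambda_{0}^{2}}{|x|}\right)u^{p}(x)=(-\triangle)^{\alpha/2}u_{\lambda_{0}}(x)=(-\triangle)^{\alpha/2}u(x)=Q(|x|)u^{p}(x),
\]
so $Q(\lambda_{0}^{2}/r)=Q(r)$ for every $r>0$ since $u>0$. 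Choosing $r_{\ast}=\tfrac12\min\{1,\lambda_{0}^{2}\}\in(0,1)$ gives $Q(r_{\ast})>0$ by \eqref{Q-condition}, while $\lambda_{0}^{2}/r_{\ast}=2\lambda_{0}^{2}/\min\{1,\lambda_{0}^{2}\}\ge2>1$ forces $Q(\lambda_{0}^{2}/r_{\ast})\le0$, contradicting $Q(r_{\ast})=Q(\lambda_{0}^{2}/r_{\ast})$. Therefore \eqref{prescribing-on-R^n} has no positive solution, and undoing the reduction, neither does \eqref{prescribing-on-sphere}.

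The crux — the only place needing real work beyond Lemma~\ref{l:key-lemma} — is Steps~1--2. Unlike the autonomous setting of Section~\ref{s:maximum-principle}, the equation for $w_{\lambda}$ here carries the non-autonomous term $\big(Q(|x^{\lambda}|)-Q(|x|)\big)u_{\lambda}^{p}$, which is not proportional to $w_{\lambda}$, so Theorem~\ref{narrow-region-principle} does not apply verbatim. The monotonicity in \eqref{Q-condition} is exactly what gives this term a definite sign on $B_{\lambda}(0)$ for $\lambda\le1$ — and one must check that the decay \eqref{decay-condition} keeps $\Sigma_{\lambda}^{-}$ (hence $\lambda_{0}$) inside $\{|x|<1\}$, so that $\lambda\le1$ throughout; the continuity of $Q$ makes the term of size $\dist(x,\partial B_{\lambda}(0))$ on the narrow annuli where the argument localizes; and a Hopf-type non-degeneracy of $w_{\lambda}$ at $\partial B_{\lambda}(0)$ (valid for small $\lambda$ by \eqref{decay-condition}, and propagated along the deformation via Proposition~\ref{strong-max-principle} and a boundary-point analysis) renders that distance comparable to $|w_{\lambda}|$, allowing the term to be folded into the bounded coefficient $c(x)$ in Theorem~\ref{narrow-region-principle}. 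The remaining steps — ruling out $\lambda_{0}=+\infty$ from the decay, and extracting the contradiction from $Q(\lambda_{0}^{2}/r)\equiv Q(r)$ — are short.
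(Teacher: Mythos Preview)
Your overall architecture — reduce to \eqref{prescribing-on-R^n} with \eqref{Q-condition} and \eqref{decay-condition}, then run moving spheres centered at the origin — is the paper's strategy, and your endgame contradictions (ruling out $\lambda_0=+\infty$ via decay, and $w_{\lambda_0}\equiv0$ via $Q(\lambda_0^2/r)=Q(r)$) are sound. The gap is in the direction you move the spheres.

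You move \emph{outward} from small $\lambda$, aiming at $w_\lambda\ge0$ in $B_\lambda(0)$. With the decomposition
\[
(-\triangle)^{\alpha/2}w_\lambda=Q(|x|)\bigl(u_\lambda^{p}-u^{p}\bigr)+\bigl(Q(|x^\lambda|)-Q(|x|)\bigr)u_\lambda^{p},
\]
for $|x|<\lambda\le1$ one has $|x^\lambda|>|x|$ and hence, by \eqref{Q-condition}, $Q(|x^\lambda|)-Q(|x|)\le0$. Writing $c(x)=-pQ(|x|)\xi^{p-1}$ (bounded) you get
\[
(-\triangle)^{\alpha/2}w_\lambda+c(x)w_\lambda=\bigl(Q(|x^\lambda|)-Q(|x|)\bigr)u_\lambda^{p}\le0,
\]
which is the \emph{wrong} inequality for Theorem~\ref{narrow-region-principle}. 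Your remedy — bound the extra term by $C|w_\lambda|$ via a Hopf-type lower bound $|w_\lambda|\gtrsim\dist(x,\partial B_\lambda)$ — is circular in Step~1 (you are \emph{proving} $w_\lambda\ge0$, so no Hopf lemma is available at a putative negative minimum), and it needs $Q$ Lipschitz whereas you only assume continuity. You also need, but never justify, that $\lambda_0\le1$ so that the sign condition on $Q$ persists throughout the deformation.

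The paper sidesteps all of this by reversing direction: start at $\lambda=1$ and move \emph{inward}, proving $w_\lambda\le0$ in $B_\lambda(0)$. At $\lambda=1$ one has $Q(1/r)\le0<Q(r)$ for $r<1$, so $(-\triangle)^{\alpha/2}w_1<0$ and Proposition~\ref{strong-max-principle} gives $w_1<0$ immediately. For $0<\lambda\le1$ the same sign computation now gives
\[
(-\triangle)^{\alpha/2}(-w_\lambda)+\tilde c(x)(-w_\lambda)\ge0,
\]
with $\tilde c$ bounded, because the non-autonomous term has the \emph{helpful} sign and is simply dropped; Theorem~\ref{narrow-region-principle} then pushes $\lambda$ down to $0$. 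Finally, sending $\lambda_j\to0$ in $u_{\lambda_j}\le u$ together with \eqref{decay-condition} yields $u(0)\ge M_0/\lambda_j^{\,n-\alpha}\to\infty$, a contradiction. No Hopf lemma, no absorption, no restriction on $\lambda_0$ is needed.
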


\begin{proof}

To prove the theorem, it suffice to show the non-existence of positive $C^{1,1}$ solution to the equation (\ref{prescribing-on-R^n}) with
(\ref{decay-condition}) and (\ref{Q-condition}).

Let
\begin{equation} x^{\lambda}\equiv\frac{\lambda^2 x}{|x|^2}
,\
u_{\lambda}(x)\equiv \Big(\frac{\lambda}{|x|}\Big)^{n-\alpha}u(x^{\lambda}),\end{equation}
and
\begin{equation}
w_{\lambda}(x)\equiv u_{\lambda}(x)-u(x).
\end{equation}
Straightforward computations give that
\begin{equation}
(-\triangle)^{\alpha/2}u_{\lambda}(x)=Q\Big(\frac{\lambda^2}{r}\Big)u_{\lambda}^p(x),
\end{equation}
and then
\begin{equation}
(-\triangle)^{\alpha/2}w_{\lambda}(x)=Q\Big(\frac{\lambda^2}{r}\Big)u_{\lambda}^p(x)-Q(r)u^p(x) \label{equation-of-w}\end{equation}
Since $Q(r)>0$, $Q(1/r)\leq 0$ for every $0<r<1$,
it holds that for $\lambda=1$,\begin{equation}
(-\triangle)^{\alpha/2}w_{1}(x)<Q(1/r)u_{1}^p(x)\leq  0.
\end{equation}
By Proposition \ref{strong-max-principle}, $w_{1}(x)< 0$ for every $x\in B_1(0^n)$.
The above arguments provides a starting point.
We define
\begin{equation}\lambda_0\equiv\inf\{\lambda>0 \mid w_{\mu}(x)\leq 0, \ \forall x\in  B_{\mu}(0^n),\ \ \forall \lambda\leq \mu\leq 1\}.\end{equation}
We will apply the narrow region principle (Theorem \ref{narrow-region-principle}) to show  $\lambda_0=0$. We argue by contradiction. That is, suppose $\lambda_0>0$.
Given any $0<\lambda\leq 1$, equation (\ref{equation-of-w}) gives that
\begin{eqnarray}
(-\triangle)^{\alpha/2}w_{\lambda}(x)&=&Q\Big(\frac{\lambda^2}{r}\Big)(u_{\lambda}^p(x)-u^p(x))+\Big(Q\Big(\frac{\lambda^2}{r}\Big)-Q(r)\Big)u^p(x)\nonumber\\&<&Q\Big(\frac{\lambda^2}{r}\Big)(u_{\lambda}^p(x)-u^p(x))\nonumber\\
&=&
p\cdot Q\Big(\frac{\lambda^2}{r}\Big)\cdot\psi_{\lambda}(x)\cdot w_{\lambda}(x),
\end{eqnarray}
where \begin{equation}
\min\{u_{\lambda}^{p-1}(x),u^{p-1}(x)\}\leq
 \psi_{\lambda}(x)\leq\max\{u_{\lambda}^{p-1}(x),u^{p-1}(x)\}.
 \end{equation}
Notice that
\begin{eqnarray}
u_{\lambda}(x)&\equiv& \Big(\frac{\lambda}{|x|}\Big)^{n-\alpha}u(x^{\lambda})\nonumber\\
&=&\frac{1}{\lambda^{n-\alpha}}\Big(\frac{\lambda^2}{|x|}\Big)^{n-\alpha}u\Big(\frac{\lambda^2x}{|x|^2}\Big).\end{eqnarray}
By the decay condition (\ref{decay-condition}), the above equality implies that
the function
\begin{equation}\|\psi_{\lambda_0}\|_{L^{\infty}(\dR^n)}\leq\frac{C(n,\|\tilde{u}\|_{L^{\infty}(S^n)})}{\lambda_0^{n-\alpha}}<\infty.\label{bounded-psi}\end{equation}
Therefore,
\begin{equation}
(-\triangle)^{\alpha/2} w_{\lambda_0}(x)+c_{\lambda_0}(x)\cdot w_{\lambda_0}(x)<0,\label{inequality-of-w}
\end{equation}
where
$c_{\lambda_0}(x)\equiv-p\cdot Q\Big(\frac{\lambda_0^2}{r}\Big)\cdot\psi_{\lambda_0}(x)$ is uniformly
bounded due to (\ref{bounded-psi}).

By the definition of $\lambda_0$,
\begin{equation}
w_{\lambda_0}(x)\leq 0,\ \forall x\in B_{\lambda_0}(0^n).
\end{equation}
Observe that $w_{\lambda_0}(x)<0$
for every $x\in B_{\lambda_0}(0^n)$. Otherwise, there exists $x_0\in B_{\lambda_0}(0^n)$ such that \begin{equation}w_{\lambda_0}(x_0)=\sup\limits_{x\in B_{\lambda_0}(0^n)}w_{\lambda_0}(x)=0.\end{equation} Moreover, the function $w_{\lambda_0}$ satisfies the anti-symmetry property in the sense of Proposition
\ref{strong-max-principle}.
Applying Proposition \ref{strong-max-principle}, $w_{\lambda_0}(x)\equiv0$, for all $x$ in $B_{\lambda_0}(0^n)$, which contradicts  (\ref{inequality-of-w}).

Let $\epsilon_0>0$
be the constant in Proposition \ref{narrow-region-principle},
and let $\delta_0\equiv\sup\limits_{x\in B_{\lambda_0-\frac{\epsilon_0}{10}}(0^n)}w_{\lambda_0}(x)<0$.
Then there exists $\delta_1\in(0,\min\{\lambda_0/100,\epsilon_0/100\})$ such that for every $0\leq\delta<\delta_1$, by the continuity of $w_{\lambda}$, it holds that
\begin{equation}
w_{\lambda_0-\delta}(x)<0,\ \forall x\in B_{\lambda_0-\frac{\epsilon_0}{10}}(0^n).
\end{equation}
Moreover, \begin{equation}
(-\triangle)^{\alpha/2} w_{\lambda_0-\delta}(x)+c_{\lambda_0-\delta}(x)\cdot w_{\lambda_0-\delta}(x)<0,\label{inequality-of-w-delta}
\end{equation}
and the function $c_{\lambda_0-\delta}$
is uniformly bounded provided by the choice of $\delta>0$.
Now applying narrow region principle (Theorem \ref{narrow-region-principle}), we have
$$w_{\lambda_0-\delta}(x)<0, \; \text{in}  B_{\lambda_0-\delta}(0^n), \; \forall \delta\in (0,\delta_1).$$ This
contradicts the definition of $\lambda_0$, and thus completes the proof of $\lambda_0=0$.

By the definition of $\lambda_0$,
there exists a sequence $\lambda_j\rightarrow0$ such that for every $j\in\dZ_+$,
\begin{equation}
\frac{1}{\lambda_j^{n-\alpha}}\Big(\frac{\lambda_j^2}{|x|}\Big)^{n-\alpha}u\Big(\frac{\lambda_j^2\cdot x}{|x|^2}\Big)<u(x),\ \forall x\in B_{\lambda_j}(0^n).
\end{equation}
By the decay condition (\ref{decay-condition}),
\begin{equation}
u(0)>\frac{M_0}{\lambda_j^{n-\alpha}},
\end{equation}
 and hence,
 \begin{equation}
 u(0)>\lim\limits_{j\rightarrow\infty}\frac{M_0}{\lambda_j^{n-\alpha}}=+\infty.
 \end{equation}
Contradiction.

\end{proof}

In fact, by applying a decay estimate in \cite{CFY}, Theorem \ref{prescribing-Q} can be strengthened by removing the decay condition on $u$. That is,
\begin{corollary}

Let $Q$ be continuous and rotationally symmetric on the Eucliean space $\dR^n$. In addition, assume that $Q$ is monotone in the region where $Q>0$ and $Q\not\equiv C$. Then for every $0<\alpha\leq 2$, the the equation
$$
(-\triangle)^{\alpha/2} u(x)=Q(x)\cdot u^{p}(x),\ p\equiv\frac{n+\alpha}{n-\alpha},\ x\in\dR^n,
$$
 does not admit any positive solution $u\in L_{\alpha}\cap C^{1,1}(\dR^n)$.

\end{corollary}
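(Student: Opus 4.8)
The plan is to remove the a priori decay hypothesis (\ref{decay-condition}) by showing it is automatic, and then to run the moving-spheres argument of Theorem \ref{prescribing-Q} verbatim. First I would perform the same normalization of $Q$ as in Section \ref{s:prescribing}, reducing to the case (\ref{Q-condition}): since $Q$ is radial and monotone on the region where it is positive, a radial rescaling (together with an inversion if the monotonicity runs the other way) puts $Q$ in the form $Q(r)>0,\ Q'(r)\le 0$ for $r<1$ and $Q(r)\le 0$ for $r\ge 1$. Let $u\in L_{\alpha}\cap C^{1,1}(\dR^n)$ be a positive solution; we aim for a contradiction.

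The one genuinely new ingredient is the following. With (\ref{Q-condition}) in force we have $(-\triangle)^{\alpha/2}u(x)=Q(x)u^p(x)\le 0$ for every $|x|\ge 1$, so $u$ is a positive function that is $\alpha$-superharmonic near infinity and lies in $L_{\alpha}$; this is precisely the setting of the decay estimate of \cite{CFY}, which I would invoke to conclude that $\lim_{|x|\to\infty}|x|^{n-\alpha}u(x)=M_0$ exists and $M_0>0$. In particular (\ref{decay-condition}) holds, and moreover one obtains the upper bound $u(x)\le C|x|^{-(n-\alpha)}$ for $|x|$ large, which is exactly where the boundedness of $\tilde u$ entered the proof of Theorem \ref{prescribing-Q} (it is what makes the Kelvin transforms $u_\lambda$, hence the factor $\psi_{\lambda}$ and the coefficient $c_{\lambda}$, bounded).

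Once (\ref{decay-condition}) is available the remainder is a transcription of the proof of Theorem \ref{prescribing-Q}. The sign relation $Q(1/r)\le 0<Q(r)$ for $0<r<1$ together with Proposition \ref{strong-max-principle} provides the starting point $w_1<0$ on $B_1(0^n)$; the upper bound on $u$ keeps $c_{\lambda_0}$ bounded, so the Narrow Region Principle (Theorem \ref{narrow-region-principle}) and Proposition \ref{strong-max-principle} push the critical radius down to $\lambda_0=0$; and then, since $w_\lambda\le 0$ on $B_\lambda(0^n)$ for all $0<\lambda\le 1$, letting $x\to 0$ and using the decay asymptotic yields
$$u(0)\ \ge\ \lim_{x\to 0}u_\lambda(x)\ =\ \frac{M_0}{\lambda^{n-\alpha}}\ \longrightarrow\ +\infty\quad(\lambda\to 0^+),$$
which is impossible because $M_0>0$ and $u(0)<\infty$. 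Hence no positive $C^{1,1}$ solution exists. The only real work is thus isolated in the passage from ``$u\in L_{\alpha}$, positive, $\alpha$-superharmonic near infinity'' to the sharp two-sided bound $u(x)\asymp|x|^{-(n-\alpha)}$ with strictly positive limiting constant; this is the main obstacle, and it is exactly what \cite{CFY} supplies.
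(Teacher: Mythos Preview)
Your approach is exactly what the paper does: the corollary is stated there with only the one-line justification that ``by applying a decay estimate in \cite{CFY}'' one recovers (\ref{decay-condition}), after which the proof of Theorem \ref{prescribing-Q} runs unchanged. You have correctly isolated the single new ingredient (the passage from $u\in L_{\alpha}\cap C^{1,1}$ to the asymptotic $|x|^{n-\alpha}u(x)\to M_0>0$) and correctly noted where it is used (boundedness of $c_{\lambda_0}$ and the terminal contradiction).

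One point to tighten. With $Q\le 0$ on $\{|x|\ge 1\}$ you have $(-\triangle)^{\alpha/2}u\le 0$ there, which makes $u$ $\alpha$-\emph{sub}harmonic near infinity, not superharmonic. This is not just terminology: decay for positive $\alpha$-superharmonic functions follows by comparison with the fundamental solution, but a positive $\alpha$-subharmonic function need not decay at all. So the input you need from \cite{CFY} cannot be deduced from the sign of $(-\triangle)^{\alpha/2}u$ outside $B_1$ alone; it must use the full equation (in particular the nonnegative right-hand side on $B_1$) together with $u\in L_{\alpha}$. Since neither the paper nor your sketch names the precise statement being invoked, it is worth locating it explicitly in \cite{CFY} before declaring the proof complete---especially the strict positivity $M_0>0$, without which the final step collapses.
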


\section{A Liouville Type Theorem on a Half Space}

In this section, we present another application of the method of moving spheres. Specifically, we will give an alternative proof of the Liouville type theorem for the non-negative solutions to the problem
\begin{equation}
\begin{cases}
(-\triangle)^{\alpha/2}u(x)=u^p(x),\ x\in\dR_+^n,\ 1<p\leq\frac{n+\alpha}{n-\alpha}\\
u(x)=0, \ x\not\in\dR_+^n,
\end{cases}\label{upper-half-space}
\end{equation}
in both subcritical and critical cases.
This result was first proved in \cite{CFY} by an integral equation method.
Our proof here is quite different and much simpler--a combination
 of direct methods of moving spheres and moving planes.
The following narrow region principle for moving planes in \cite{CLL} will be use in our proof (see more details in that paper).

\begin{proposition}
[Narrow Region Principle for Moving Planes]\label{p:narrow-region-hyperplane} Given $\lambda\in \dR^1$, let
\begin{equation}
T_{\lambda}\equiv\{x=(x',x_n)\in\dR^n \mid x_n=\lambda\}.
\end{equation}
Denote
$$\tilde{x}^{\lambda}\equiv(x',2\lambda-x_n), \;\; \tilde{u}_{\lambda}(x)\equiv u(\tilde{x}^{\lambda}),$$
$$H_{\lambda}\equiv
\{x\in\dR^n \mid 0< x_n<\lambda\}, \; \text{and}  \;\; \tilde{H}_{\lambda}\equiv\{x\in\dR^n \mid \tilde{x}^{\lambda} \in H\}.$$
Let $w\in L_{\alpha}\cap C_{loc}^{1,1}(\Omega)$ be lower semicontinuous in $\bar{\Omega}$ such that
\begin{equation}
\begin{cases}
(-\triangle)^{\alpha/2}w(x)+c(x)w(x)\geq 0,\ x\in\Omega,\nonumber\\
w(x)\geq 0,\ x\in H_{\lambda}\setminus\Omega,
\nonumber\\
w_{\lambda}\tilde{x})=-w(x),\ x\in H_{\lambda},
\end{cases}
\end{equation}
then there exists sufficiently small
$\delta>0$
such that
if $\Omega\subset (H_{\lambda}\setminus H_{\lambda-\delta})$
 and is bounded
then $$w(x)\geq 0, \;
\forall \, x\in\Omega.$$
Furthermore, if $w(x)=0$ for some $x\in\Omega$, then
$$w(x)=0, \; \forall \, x\in\dR^n.$$

These conclusions hold for unbounded region $\Omega$ if we further assume that
$$\underset{|x| \ra \infty}{\underline{\lim}} w(x) \geq 0 .$$
\end{proposition}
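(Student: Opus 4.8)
The plan is to prove Proposition \ref{p:narrow-region-hyperplane} by the same scheme that yields Theorem \ref{narrow-region-principle}: assume the conclusion fails, locate the point where $w$ attains a negative infimum, and evaluate $(-\triangle)^{\alpha/2}$ there to reach a contradiction; the only structural change is that the spherical Kelvin inversion $x\mapsto x^{\lambda}$ is replaced by the Euclidean reflection $x\mapsto\tilde x^{\lambda}=(x',2\lambda-x_n)$ across $T_{\lambda}$. So suppose $\inf_{\Omega}w<0$.

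First I would produce a point $x_0$ realizing this infimum. If $\Omega$ is bounded, lower semicontinuity on the compact set $\bar{\Omega}$ does it, and $x_0\in\Omega$ (on $\bar{\Omega}\setminus\Omega$ one has $w\ge0$, and on $T_{\lambda}$ anti-symmetry forces $w=0$), so the differential inequality holds at $x_0$. If $\Omega$ is unbounded, I would use the extra hypothesis $\liminf_{|x|\to\infty}w(x)\ge0$ to prevent a minimizing sequence from escaping to infinity; a subsequence then converges to a finite $x_0$, lower semicontinuity gives $w(x_0)=\inf_{\Omega}w<0$, and again $x_0\in\Omega$.

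Next, set $\tilde w=w-w(x_0)$, so that $\tilde w(x_0)=0$, $(-\triangle)^{\alpha/2}\tilde w=(-\triangle)^{\alpha/2}w$, and $\tilde w\ge0$ on the half-space $\Sigma_{\lambda}:=\{x_n<\lambda\}$ (combine $w\ge w(x_0)$ on $\Omega$ with $w\ge0\ge w(x_0)$ on its complement in $\Sigma_{\lambda}$). Writing $(-\triangle)^{\alpha/2}\tilde w(x_0)=-C_{n,\alpha}\int_{\dR^n}\tilde w(y)\,|x_0-y|^{-n-\alpha}\,dy$, I split the integral across $T_{\lambda}$ and, in the part over $\{x_n>\lambda\}$, substitute $y\mapsto\tilde y^{\lambda}$ (a measure-preserving involution carrying $\Sigma_{\lambda}$ onto $\{x_n>\lambda\}$); the anti-symmetry relation becomes $\tilde w(\tilde y^{\lambda})=-\tilde w(y)-2w(x_0)$. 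This produces a ``good'' term $-C_{n,\alpha}\int_{\Sigma_{\lambda}}\tilde w(y)\big(|x_0-y|^{-n-\alpha}-|x_0-\tilde y^{\lambda}|^{-n-\alpha}\big)\,dy$, which is $\le0$ thanks to the elementary identity
\[
 |x_0-\tilde y^{\lambda}|^2-|x_0-y|^2=4\,(x_{0,n}-\lambda)(y_n-\lambda)>0\qquad (x_0,y\in\Sigma_{\lambda}),
\]
(this is exactly the role of ``$J_1+J_{21}\le0$'' in the proof of Theorem \ref{narrow-region-principle}), plus a ``bad'' term $2C_{n,\alpha}\,w(x_0)\int_{\Sigma_{\lambda}}|x_0-\tilde y^{\lambda}|^{-n-\alpha}\,dy$. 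Since $\Omega$ lies in the slab $\{\lambda-\delta<x_n<\lambda\}$, the point $x_0$ is within $\delta$ of $T_{\lambda}$, and a direct estimate bounds that integral below by $c(n,\alpha)\,\delta^{-\alpha}$; as $w(x_0)<0$, this gives $(-\triangle)^{\alpha/2}\tilde w(x_0)\le-c(n,\alpha)\,\delta^{-\alpha}\,|w(x_0)|$. On the other hand the differential inequality, with the standing lower bound $c\ge-C_0$, gives $(-\triangle)^{\alpha/2}\tilde w(x_0)=(-\triangle)^{\alpha/2}w(x_0)\ge-c(x_0)w(x_0)\ge-C_0|w(x_0)|$. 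Choosing $\delta$ small enough makes these incompatible, proving $\inf_{\Omega}w\ge0$. For the rigidity part, if $w(x_1)=0$ at some $x_1\in\Omega$ then $w\ge0$ on $\Sigma_{\lambda}$ with a zero minimum at $x_1$; rerunning the same computation at $x_1$ (the ``bad'' term now vanishes) forces the nonnegative ``good'' integrand to vanish, hence $w\equiv0$ a.e.\ on $\Sigma_{\lambda}$ and, by anti-symmetry, a.e.\ on $\dR^n$, as in Proposition \ref{strong-max-principle}.

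The step I expect to be the main obstacle is the unbounded case: ensuring the negative infimum is actually attained at a finite interior point — which is precisely what $\liminf_{|x|\to\infty}w\ge0$ buys — and checking that every integral above is finite and the reflection substitution is legitimate, where lower semicontinuity and the decay encoded in $w\in L_{\alpha}$ are essential. A secondary technical point is reconciling this clean argument (carried out on the full half-space $\Sigma_{\lambda}$) with the hypotheses as literally stated on the slab $H_{\lambda}$: in the intended applications $w$ is automatically nonnegative on $\{x_n\le0\}$ as well (there it equals the reflection of the nonnegative $u$), which is exactly what keeps $\tilde w\ge0$ on all of $\Sigma_{\lambda}$ and makes the split above go through.
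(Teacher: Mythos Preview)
The paper does not actually prove this proposition: it is quoted from \cite{CLL} with the remark ``see more details in that paper,'' so there is no in-paper proof to compare against. Your argument is correct and is precisely the planar analogue of the spherical case the paper does prove (Theorem~\ref{narrow-region-principle}): the reflection $y\mapsto\tilde y^{\lambda}$ replaces the Kelvin inversion, the identity $|x_0-\tilde y^{\lambda}|^2-|x_0-y|^2=4(x_{0,n}-\lambda)(y_n-\lambda)>0$ plays the role of the spherical inequality $\big|\frac{|z|x_0}{\lambda}-\frac{\lambda z}{|z|}\big|>|x_0-z|$, and the $\delta^{-\alpha}$ lower bound on $\int_{\{y_n>\lambda\}}|x_0-y|^{-n-\alpha}\,dy$ is obtained exactly as in the spherical proof. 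This is essentially the argument one finds in \cite{CLL}; your additional care in the unbounded case (using $\liminf_{|x|\to\infty}w\ge0$ to guarantee the negative infimum is attained) and your remark about needing $w\ge0$ on all of $\{x_n<\lambda\}$, not just on $H_{\lambda}$, are both to the point and reflect how the result is actually used in Section~5.
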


\begin{theorem}\label{t:half-space}
If $u\in L_{\alpha}(\dR^n)\cap C_{loc}^{1,1}(\dR^n)$ is a nonnegative solution to (\ref{upper-half-space}), then $u\equiv0$ in $\dR^n$.
\end{theorem}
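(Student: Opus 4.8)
The plan is to combine the two narrow region principles of the paper in two stages: first the method of moving spheres centred on $\partial\dR^n_+$ (based on Proposition \ref{strong-max-principle} and Theorem \ref{narrow-region-principle}) to control the behaviour of $u$ at infinity, and then the method of moving planes in the $x_n$-direction (based on Proposition \ref{p:narrow-region-hyperplane}) to exploit the flat boundary. Suppose, for contradiction, that $u\not\equiv 0$. If $u(x_0)=0$ at some $x_0\in\dR^n_+$, then $0=u^p(x_0)=(-\triangle)^{\alpha/2}u(x_0)=-C_{n,\alpha}\int u(y)|x_0-y|^{-n-\alpha}\,dy$ forces $u\equiv 0$; hence $u>0$ throughout $\dR^n_+$, and it is this that must be contradicted.

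\emph{Stage 1 (moving spheres from the boundary).} Fix $x^0\in\partial\dR^n_+$. Since the inversion $x\mapsto x^{\lambda}=x^0+\lambda^2(x-x^0)/|x-x^0|^2$ preserves the sign of $x_n$, it maps $\dR^n_+$ and $\dR^n\setminus\dR^n_+$ to themselves; hence the Kelvin transform $u_{\lambda}$ vanishes outside $\dR^n_+$, and $w_{\lambda}:=u_{\lambda}-u$ is anti-symmetric about $S_{\lambda}(x^0)$ in the sense of Theorem \ref{narrow-region-principle}, satisfies $w_{\lambda}\equiv 0\ge 0$ on $B_{\lambda}(x^0)\setminus\dR^n_+$, and on $\{w_{\lambda}<0\}\subset B_{\lambda}(x^0)\cap\dR^n_+$ obeys $(-\triangle)^{\alpha/2}w_{\lambda}+c_{\lambda}(x)w_{\lambda}\ge 0$ with $c_{\lambda}$ bounded below (using the equation and $1<p\le\frac{n+\alpha}{n-\alpha}$, exactly as in Lemma \ref{l:key-lemma}). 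Once the start-up is in place — $w_{\lambda}\ge 0$ in $B_{\lambda}(x^0)$ for $\lambda$ small, which requires adapting Step 1 of the proof of Lemma \ref{l:key-lemma} to $B_{\lambda}(x^0)\cap\dR^n_+$, a spherical narrow region near $\partial B_{\lambda}(x^0)$ together with a layer along $\partial\dR^n_+$ on which $u$ vanishes — one runs the dichotomy of Step 2 of Lemma \ref{l:key-lemma}. Either some boundary centre has a finite critical scale, so $u_{\lambda_0}\equiv u$ there and, as in (\ref{decay-order}), $|y|^{n-\alpha}u(y)$ tends to a positive constant; then $u$ is bounded and $u(y)\to 0$ as $|y|\to\infty$. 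Or every boundary centre has infinite critical scale, i.e. $u_{x^0,\lambda}\ge u$ in $B_{\lambda}(x^0)$ for all $\lambda>0$; then the differentiation argument in the proof of Proposition \ref{p:classification}(C), with the centre $x^0$ sent to infinity along $\partial\dR^n_+$, forces $\nabla_{x'}u\equiv 0$, so $u=u(x_n)$, and testing $u_{x^0,\lambda}\ge u$ along the normal to $\partial\dR^n_+$ shows $s\mapsto s^{(n-\alpha)/2}u(s)$ is nondecreasing on $(0,\infty)$, hence $u(x_n)\to 0$ as $x_n\to\infty$ unless $s^{(n-\alpha)/2}u(s)\to+\infty$.

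\emph{Stage 2 (moving planes in $x_n$).} With $\tilde u_{\lambda}(x)=u(x',2\lambda-x_n)$ and $w_{\lambda}=\tilde u_{\lambda}-u$ on $H_{\lambda}=\{0<x_n<\lambda\}$, one has $w_{\lambda}\ge 0$ on $\{x_n\le 0\}$ (where $u=0$), $w_{\lambda}$ anti-symmetric about $\{x_n=\lambda\}$, and $(-\triangle)^{\alpha/2}w_{\lambda}+c_{\lambda}w_{\lambda}\ge 0$ where $w_{\lambda}<0$ with $c_{\lambda}$ bounded below. In the first alternative of Stage 1, $u(y)\to 0$ as $|y|\to\infty$, so the hypothesis $\liminf_{|x|\to\infty}w_{\lambda}(x)\ge 0$ of Proposition \ref{p:narrow-region-hyperplane} holds; in the second, $w_{\lambda}$ depends only on $x_n$, so the moving-plane problem is effectively one-dimensional on a bounded interval and no condition at infinity is needed. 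Either way Proposition \ref{p:narrow-region-hyperplane} starts the planes at small $\lambda$; let $\lambda_1$ be the critical position. If $\lambda_1<\infty$, the strong maximum principle in Proposition \ref{p:narrow-region-hyperplane} forces $w_{\lambda_1}\equiv 0$, i.e. $u$ is symmetric about $\{x_n=\lambda_1\}$; since $u\equiv 0$ on $\{x_n\le 0\}$ this makes $u\equiv 0$ on $\{x_n\ge 2\lambda_1\}$, where the equation holds and gives $0=u^p=(-\triangle)^{\alpha/2}u=-C_{n,\alpha}\int_{\dR^n_+}u(y)|x-y|^{-n-\alpha}\,dy<0$, a contradiction. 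If $\lambda_1=+\infty$, then $u$ is nondecreasing in $x_n$; if in addition $u(x',x_n)\to 0$ as $x_n\to\infty$ for each $x'$ — automatic in the first alternative, and in the second unless $s^{(n-\alpha)/2}u(s)\to+\infty$ — then $u\le 0$, hence $u\equiv 0$, again a contradiction.

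\emph{Main obstacle.} Apart from the start-up of the spheres near the boundary, the one remaining case is $u=u(x_n)$ nondecreasing on $(0,\infty)$ with $s^{(n-\alpha)/2}u(s)\to+\infty$, where $u$ need not decay and the planes simply slide to $\lambda_1=\infty$. Then one is reduced to ruling out a nondecreasing, positive $C^{1,1}_{loc}(\dR)$ solution of the one-dimensional equation $\kappa_n(-\triangle_{\dR})^{\alpha/2}u=u^p$ on $(0,\infty)$ (with $\kappa_n>0$ the constant arising when $(-\triangle)^{\alpha/2}$ is restricted to functions of $x_n$ alone), $u\equiv 0$ on $(-\infty,0]$, $u\in L_{\alpha}(\dR)$, subject to the growth bound $u(s)=o(s^{\alpha})$ (a consequence of $u\in L_{\alpha}$ and monotonicity). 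For $\alpha=2$ this is elementary — $-u''=u^p>0$ makes $u$ concave, so $u(0)=0$ forces $u\le 0$ — but for $\alpha\in(0,2)$ a genuinely nonlocal replacement, obtained by analysing $(-\triangle_{\dR})^{\alpha/2}u(s)$ as $s\to+\infty$, is needed, and this is where the real work lies.
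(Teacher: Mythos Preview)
Your strategy is essentially the paper's: moving spheres centred on $\partial\dR^n_+$ to obtain the dichotomy (a) finite critical scale at some boundary centre versus (b) infinite critical scale at all boundary centres, then moving planes in $x_n$ to finish case (a), and a reduction to the one-variable problem in case (b). Two remarks on the details.

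First, your worry about the spherical start-up ``layer along $\partial\dR^n_+$'' is not a real obstacle. Since the centre $x^0$ lies on $\partial\dR^n_+$, on $B_{\lambda}(x^0)\cap\{x_n\le 0\}$ one has $u\equiv 0$ while $u_{\lambda}\ge 0$, so $w_{\lambda}\ge 0$ there for free; the narrow region principle (Theorem \ref{narrow-region-principle}) then handles the remaining thin spherical shell in $\dR^n_+$ exactly as in Lemma \ref{l:key-lemma}. The paper simply says ``identical to Step 1 of Lemma \ref{l:key-lemma}'' for this reason.

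Second, and more importantly: the ``main obstacle'' you isolate --- ruling out a positive, nondecreasing $u=u(x_n)$ solving the equation on $(0,\infty)$ with $u\equiv 0$ on $(-\infty,0]$ --- is exactly the point the paper does \emph{not} prove from scratch. After deducing in case (b) that $u$ depends only on $x_n$, the paper immediately invokes Section 5.3 of \cite{CFY}, where this one-dimensional nonexistence is established (via an integral representation and asymptotic analysis, not by the narrow region principles of this paper). So your identification of where the work lies is accurate, but you should not expect to close this gap with the tools developed here; the paper imports the result. Your attempt to also run moving planes in case (b) is slightly more than the paper does --- the paper hands off to \cite{CFY} as soon as $u=u(x_n)$ is known --- and your claim that ``no condition at infinity is needed'' when $w_{\lambda}$ depends only on $x_n$ is correct but deserves a sentence: the negativity set is an unbounded slab, yet a negative infimum is attained on the compact $x_n$-interval $[0,\lambda]$, so the pointwise computation in the proof of Theorem \ref{narrow-region-principle} still applies.
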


\begin{proof}
First, by the strong maximum principle, either $u(x)>0$ for every $x\in\dR_+^{n}$ or $u(x)\equiv0$ for every $x\in\dR^n$. So without loss of generality, from now on, we assume that the solution $u$ is strictly positive everywhere.

 Given $x_0\in\dR^{n-1}\times\{x_n=0\}$, define the Kelvin transform of the function $u$ as
\begin{equation}
u_{\lambda}(x)\equiv\Big(\frac{\lambda}{|x-x_0|}\Big)^{n-\alpha}\cdot u(x^{\lambda}), \;\;\;
x^{\lambda}\equiv\frac{\lambda^2 (x-x_0)}{|x-x_0|^2}+x_0.
\end{equation}
By definition, if the center $x_0$ is chosen on the boundary $\dR^{n-1}\times\{x_n=0\}$, then $x\in\dR_+^n$ implies that $x^{\lambda}\in\dR_+^n$.
Let $w_{\lambda}(x)=u_{\lambda}(x)-u(x)$, then it is straightforward that,
\begin{equation}
(-\triangle)^{\alpha/2}w_{\lambda}(x)=\Big(\frac{\lambda}{|x-x_0|}\Big)^{\tau}\cdot u_{\lambda}^p(x)-u^p(x), \ \forall x\in\dR_+^{n},
\end{equation}
where $\tau\equiv n+\alpha-p(n-\alpha)\geq 0$.

To apply the method of moving spheres, we take any $x\in B_{\lambda}(x_0)$. Denote
\begin{equation}
\Sigma_{\lambda}\equiv\{x\in B_{\lambda}(x_0)\mid w_{\lambda}(x)<0\}.
\end{equation}

{\textit{Step 1.}} We will show that exists $\delta_0>0$
such that for every $0<\lambda<\delta_0$, it holds that
\begin{equation}
w_{\lambda}(x)\geq 0,\ \forall x\in B_{\lambda}(x_0).\end{equation}
The proof of \textit{Step 1} is identical to that in the proof of Lemma \ref{l:key-lemma}.
Actually, in this case $g(t)/t^p =t^{-\frac{\tau}{n-\alpha}}$ with $\tau\equiv n+\alpha-p(n-\alpha)\geq 0$, is non-increasing. The computations are exactly the same as that in Lemma \ref{l:key-lemma}.

\vspace{0.5cm}

{\textit{Step 2.}} We will show that the solution $u$ depends only on $x_n$.

To this end, we analyze  the critical scale for the moving spheres, as that in the proof of Lemma \ref{l:key-lemma}.
For a given center $x_0\in\dR^n\cap\{x_n=0\}$, the critical scale  $\lambda_0\in(0,+\infty]$ is defined as follows,
\begin{equation}
\lambda_0\equiv\sup\{\lambda>0|w_{\mu}(x)\geq 0,\ \forall \ x\in B_{\mu}(x_0), \ \forall 0<\mu\leq\lambda\}.\label{critical-scale-2}
\end{equation}

As in the proof of \textit{Step 2} in Lemma \ref{l:key-lemma}, we have similar conclusions:

Given $x_0\in\dR^n\cap\{x_n=0\}$, the critical scale $\lambda_0>0$ is defined in (\ref{critical-scale-2}), then exactly one of the following holds:

$(a)$ for every $x_0\in\dR^n$, the corresponding critical scale $\lambda_0>0$ is finite, or

$(b)$  for every $x_0\in\dR^n$, the corresponding critical scale $\lambda_0=+\infty$, i.e, for every $\lambda>0$
\begin{equation}
w_{\lambda}(x)\geq 0, \forall x\in B_{\lambda}(x_0).
\end{equation}

If $(a)$ holds, then the decay property  \begin{equation}\lim\limits_{|x|\to+\infty}|x|^{n-\alpha}\cdot
u(x)=c_0>0\label{decay-ms}\end{equation}
follows.

Now we carry out the method of moving
planes to deduce a contradiction.
Since $u(x)>0$ for every $x\in\dR_+^n$ and $u(x)\equiv0$ for every $x\not\in\dR_+^n$, we have that $\tilde{w}_{\lambda}(x)\equiv\tilde{u}_{\lambda}(x)-u(x)\geq 0$ for every $x\in H_{\lambda}$ and for every $\lambda > 0$.
Then by proposition \ref{p:narrow-region-hyperplane}, there exists $\delta_0>0$ such that for every $0<\lambda<\delta_0$
we have that $\tilde{w}_{\lambda}(x)\geq 0$ for every $x\in H_{\lambda}$.
Define
\begin{equation}
\lambda_1\equiv\sup\{\lambda>0 \mid \tilde{w}_{\mu}(x)\geq 0,\ \forall x\in H_{\mu}, \forall \mu\leq \lambda \}.
\end{equation}
If $0<\lambda_1<\infty$, then for every $x\in H_{\lambda_1}$, by the strong maximum principle, we have
 \begin{equation}
 \tilde{u}^{\lambda_1}(x)=u(x).\label{reflection-identity}
 \end{equation}
But $u(x)=0$ for every $x\not\in\dR_+^n$, and thus the equation (\ref{reflection-identity}) contradicts to the positivity
of $u$. Therefore, $\lambda_1=+\infty$, which implies that $u$ is non-decreasing about $x_n$, which
contradicts the decay of $u$ (\ref{decay-ms}).

The contradictions in the above two possibilities of $\lambda_1$ show that
the case $(a)$ does not hold.

The above arguments show that the positive solution $u$ has to satisfy $(b)$.
We track back to the proof of proposition \ref{p:classification}.
It follows that
for every $y\in \partial\dR^n$,
\begin{equation}
\frac{(n-\alpha)u(z)+2\langle\nabla u(z),(z-y)\rangle}{|y|}\geq 0.
\end{equation}
Let $|y|\to\infty$, then we have that
\begin{equation}
\langle\nabla u(z),\nu\rangle\geq 0,
\end{equation}
where $\nu\equiv\frac{z-y}{|z-y|}$.
Since $y$ is arbitrary, the vector $\nabla u(z)$ has to be perpendicular to the hyperplane $x_n=0$. That is, the function $u$ depends only on $x_n$.

\vspace{0.5cm}

By \textit{Step 2},  the positive solution $u$ depends only on $x_n$. The arguments in section 5.3 of  the paper \cite{CFY}
shows that $u$ has to be identically zero, which gives the desired contradiction.

\end{proof}


\begin{thebibliography}{}


\bibitem[B]{B} Bianchi, G.: {\it Non-existence of positive solutions to semilinear elliptic equations on $\dR^n$ or $\dR_+^n$ through the method of moving planes}, Comm. PDE, {\bf{22}} (9-10), 1671-1690, 1997

\bibitem[BCPS] {BCPS} Brandle, C., Colorado, E., de Pablo, A., and Sanchez, U.:  {\it A concave–convex elliptic problem involving the fractional Laplacian}, Proc Royal Soc. of Edinburgh, 143(2013) 39-71.



\bibitem[ChGon]{CG} Chang, A., Gonz\'alez, M.: {\it Fractional Laplacian in conformal geometry}. Advances in Math., {\bf{226}} (2), 1410-1432, 2011.





\bibitem[ChFY]{CFY} Chen, W., Fang, Y., Yang, R.: {\it Semilinear equations involving the fractional Laplacian on a half space}, Advances in Math. 274(2015) 167-198.




\bibitem[ChLi]{ChLi} Chen, W., Li, C.:
{\it A necessary and sufficient condition for the Nirenberg
problem}. Comm. Pure Appl. Math. {\bf(48)}, 657-667,  1995

\bibitem[ChLi1]{ChLi1} Chen, W., Li, C.: {\it Prescribing scalar curvature on $S^n$}, Pacific J. Math, 199(2001) 61-78.

\bibitem[ChLi2]{ChLi2} Chen, W. and Li, C.: {\it A note on Kazdan-Warner conditions}, J. of Diff. Geom. 41(1995) 259-268.

\bibitem[ChLL]{CLL} Chen, W., Li, C., Li, Y.: {\it A direct method of moving planes for
fractional Laplacian}, Submitted to Adv. in Math.

\bibitem[CLLg]{CLLg} Chen, W., Li, C.,  and Li, G.: {\it Symmetry of solutions for nonlinear problems involving fully nonlinear nonlocal operators},  preprint, 2015.

\bibitem[CLO]{CLO} Chen, W., Li, C., and Ou, B.:  {\it Classification of solutions for an integral equation},
 Comm. Pure. Appl. Math, 59(2006) 330-343.

\bibitem[CLO1]{CLO1} Chen, W.,  Li, C., and Ou, B.:  {\it  Qualitative properties of solutions for an integral equation}  Disc. Cont. Dyn. Sys. 12(2005) 347-354.


\bibitem[CS]{CS}
Caffarelli, L., Silvestre, L.: {\it An extension problem related to the fractional Laplacian}. Comm. PDE, {\bf{32}} (7-9), 1245-1260, 2007.

\bibitem[CS1]{CS1} Caffarelli L. and Silvestre, L.: {\it Regularity theory for fully nonlinear integro-differential equations},  Comm. Pure. Appl. Math, 62(2009) 597-638.


\bibitem[CZ]{CZ} Chen, W. and Zhu, J.: {\it Indefinite fractional elliptic problem and Liouville theorems}, submitted to JDE, arXiv: 1404.1640v1, 2014.

\bibitem[dNPV]{NPV} Di Nezza, E., Palatucci, G., Valdinoci, E.:
{\it Hitchhiker's guide to the fractional Sobolev spaces}, arXiv: 1104.4345, 2011.



\bibitem[GrZ]{GZ} Graham, R., Zworski, M: {\it{Scattering matrix in conformal geometry}}, Invent. Math., {\bf{152}} (1), 89-118, 2003.


\bibitem[JW]{JW} Jarohs, S. and Weth, T.: {\it Symmetry via antisymmetric maximum principles in nonlocal problems of variable order,} arXiv: 1406.6181v1, 2014.

\bibitem[JLX1]{JLX1}Jin, T., Li, Y., Xiong, J.: {\it On a fractional Nirenberg problem, part I: blow up analysis and compactness of solutions},
 J. Eur. Math. Soc. {\bf 16} (6), 1111-1171, 2014


\bibitem[JLX2]{JLX2}Jin, T., Li, Y., Xiong, J.: {\it On a fractional Nirenberg problem, part II: existence of solutions}, Int. Math. Res. Not.,  (6), 1555-1589, 2015

\bibitem[Kaz]{Kaz} Kazdan, J.: {\it Prescribing the curvature of a Riemannian manifold}, CBMS Lecture, published by AMS, No. 57, 1984.

\bibitem[LZ]{LZ} Li, Y., Zhu, M.:{\it Uniqueness theorems through the method of moving spheres}, Duke. Math. J. {\bf{80}} (2), 383-417, 1995

\bibitem[LuZ]{LuZ} Lu, G.  and Zhu, J.: {\it An overdetermined problem in Riesz-potential and fractional Laplacian,} Nonlinear Analysis 75(2012) 3036-3048.

\bibitem[LuZ1]{LuZ1} Lu, G. and Zhu, J.: {\it The axial symmetry and regularity of solutions to an integral equation in a half space,} Pacific J. Math. 253(2011) 455-473.

\bibitem[LuZ2]{LuZ2} Lu G. and Zhu, J.: {\it  Symmetry and regularity of extremals of an integral equation related to the Hardy-Sobolev inequality,} Cal. Var. \& PDEs, 42(2011) 563-577.


\bibitem[TF]{TF} Tang, D. and Fang, Y.: {\it Application of the fractional Laplacian equation of direct moving plane method}, preprint, 2015.

\bibitem[XY]{XY} Xu, X. and Yang, P. : {\it Remarks on prescribing Gaussian curvature}, Trans. AMS,
336(1993) 831-840.
\end{thebibliography}
\end{document}